\def\hlinew#1{%
  \noalign{\ifnum0=`}\fi\hrule \@height #1 \futurelet
   \reserved@a\@xhline}
\newtheorem{theorem}{Theorem}[section]
\newtheorem{conjecture}[theorem]{Conjecture}
\newtheorem{lemma}[theorem]{Lemma}
\newtheorem{claim}{Claim}[section] 
\newtheorem{proposition}[theorem]{Proposition}
\begin{document}

\title{The maximum spectral radius of non-bipartite graphs forbidding short odd cycles\thanks{The research was supported by National Natural Science Foundation of China grant 11931002. 
E-mail addresses: \url{ytli0921@hnu.edu.cn} (Y\v{o}ngt\={a}o L\v{i}), 
\url{ypeng1@hnu.edu.cn} (Yu\`{e}ji\`{a}n P\'{e}ng, corresponding author).}  }

\author{Yongtao Li, Yuejian Peng$^{\dag}$ \\[2ex]
{\small School of Mathematics, Hunan University} \\
{\small Changsha, Hunan, 410082, P.R. China }  }

\maketitle

\begin{abstract}
It is well-known that eigenvalues of graphs can be used to 
describe structural properties and parameters of graphs. 
A theorem of Nosal states that 
if $G$ is a triangle-free graph with $m$ edges, then 
$\lambda (G)\le \sqrt{m}$, equality holds if and only if 
$G$ is a complete bipartite graph. 
Recently, 
Lin, Ning and Wu [Combin. Probab. Comput. 30 (2021)] 
proved a generalization for non-bipartite triangle-free graphs. 
Moreover, Zhai and Shu [Discrete Math. 345 (2022)] presented  a further improvement. 
In this paper,  we present an alternative method for proving   
the improvement by Zhai and Shu. 
Furthermore, the method can allow us to give a  refinement on 
the result of Zhai and Shu for non-bipartite graphs without 
short odd cycles.  
 \end{abstract}

{{\bf Key words:}  Nosal theorem; 
eigenvalues; 
odd cycles; 
non-bipartite graphs;
Cauchy interlacing theorem. }

{{\bf 2010 Mathematics Subject Classification.}  05C50, 05C35.}

\section{Introduction}

The present work can be viewed as the second paper 
of our previous project \cite{LP2022second}. 
In this paper, 
we shall use the following standard notation; see e.g., the monograph \cite{BM2008}. 
We consider only simple and undirected graphs. Let $G$ be a simple
 graph with vertex set $V(G)=\{v_1, \ldots, v_n\}$ and edge set $E(G)=\{e_1, \ldots, e_m\}$.  
 We usually write $n$ and $m$ for the number of vertices and edges 
 respectively. 
Let $N(v)$ or $N_G(v)$ be the set of neighbors of $v$, 
and $d(v)$ or $d_G(v)$ be the degree of a vertex $v$ in $G$. 
For a subset $S\subseteq V(G)$, we write $e(S)$ for the 
number of edges with two endpoints in $S$. 
  Let $K_{s,t}$ be the complete bipartite graph with parts of sizes 
  $s$ and $t$. 
 We write  $C_n$ and $P_n$ for the cycle and 
 path on $n$ vertices respectively. 
We denote by $t(G)$ the number of triangles in $G$.

\subsection{The classical extremal graph problems}

We say that a graph $G$ is $F$-free if it does not contain 
  an isomorphic copy of $F$ as a subgraph. 
  Apparently, every bipartite graph is $C_{2k+1}$-free 
  for every integer $k\ge 1$. 
 The {\em Tur\'{a}n number} of a graph $F$ is the maximum number of edges  in an $n$-vertex $F$-free graph, and 
  it is usually  denoted by $\mathrm{ex}(n, F)$. 
  A graph on $n$ vertices with no subgraph $F$ and with $\mathrm{ex}(n, F)$ edges is called an {\em extremal graph} for $F$. 
As is known to all, the Mantel theorem \cite{Man1907} asserts that 
if $G$ is an $n$-vertex graph with  
at least $\lfloor \frac{n^2}{4} \rfloor$ edges, 
then either there exist three edges in $G$ that form a triangle 
or $G=K_{\lfloor \frac{n}{2}\rfloor, \lceil \frac{n}{2} \rceil }$, 
the balanced complete bipartite graph. 
  
  \begin{theorem}[Mantel, 1907] \label{thmMan}
  Let  $G$ be an $n$-vertex graph. If $G$ is triangle-free, then 
$ e(G) \le  
e(K_{\lfloor \frac{n}{2}\rfloor, \lceil \frac{n}{2} \rceil }) =  \lfloor \frac{n^2}{4} \rfloor $,  
equality holds if and only if  $G=K_{\lfloor \frac{n}{2}\rfloor, \lceil \frac{n}{2} \rceil }$.
\end{theorem}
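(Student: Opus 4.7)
The plan is to bound $e(G)$ by localizing around a vertex of maximum degree, which handles both parities of $n$ uniformly and avoids the technical annoyance of integer-rounding in a Cauchy--Schwarz-based chain. Pick $v_{0}\in V(G)$ with $d_{G}(v_{0})=\Delta:=\Delta(G)$, and set $A=N_{G}(v_{0})$ and $W=V(G)\setminus N_{G}[v_{0}]$. By triangle-freeness, $A$ is an independent set, so every edge of $G$ is either incident to $v_{0}$, lies between $A$ and $W$, or lies inside $W$. Writing $e(A,W)$ for the number of edges with one endpoint in each part, this gives $e(G)=\Delta+e(A,W)+e(W)$. Combining the degree bound $\sum_{u\in W}d_{G}(u)\le\Delta|W|$ with the identity $\sum_{u\in W}d_{G}(u)=2e(W)+e(A,W)$ eliminates $e(A,W)$ and yields
\[
e(G)\ \le\ \Delta(n-\Delta)-e(W)\ \le\ \Delta(n-\Delta)\ \le\ \lfloor n/2\rfloor\lceil n/2\rceil\ =\ \lfloor n^{2}/4\rfloor,
\]
where the last inequality is AM--GM.

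For the equality characterization, every inequality in the displayed chain must be tight. The rightmost forces $\{\Delta,n-\Delta\}=\{\lfloor n/2\rfloor,\lceil n/2\rceil\}$; the middle forces $e(W)=0$; and the leftmost, together with $e(W)=0$, forces $d_{G}(u)=\Delta$ and $N_{G}(u)=A$ for every $u\in W$. Since $v_{0}$ is itself adjacent to all of $A$ and $A$ is independent, these conditions pin down $G=K_{|A|,\,|W|+1}=K_{\lfloor n/2\rfloor,\lceil n/2\rceil}$.

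The main obstacle I expect is a short parity check for odd $n$: the two candidate values $\Delta\in\{\lfloor n/2\rfloor,\lceil n/2\rceil\}$ are not symmetric, because vertices in $A$ have degree $n-\Delta$ in the extremal graph, which must not exceed the maximum degree $\Delta$. This forces $\Delta=\lceil n/2\rceil$ in the odd case. A secondary care point is the degenerate regime $W=\emptyset$, i.e.\ $\Delta=n-1$, where $e(G)\le n-1<\lfloor n^{2}/4\rfloor$ for $n\ge 4$, so this case is irrelevant to equality except in small base cases that can be checked by hand.
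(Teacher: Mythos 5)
Your proof is correct and complete, including the equality analysis: the decomposition $e(G)=\Delta+e(A,W)+e(W)$, the degree-sum elimination of $e(A,W)$, and the integer AM--GM step all check out, and tightness does pin down $e(W)=0$, $d(u)=\Delta$ and $N(u)=A$ for all $u\in W$, hence $G=K_{\lfloor n/2\rfloor,\lceil n/2\rceil}$. (Your parity worry is actually harmless: whether $\Delta=\lfloor n/2\rfloor$ or $\lceil n/2\rceil$, the set $\{\Delta,n-\Delta\}$ is the same, so the conclusion is unaffected; the max-degree constraint only decides which side of the bipartition is $A$.) Note, though, that the paper does not give a combinatorial proof of Mantel's theorem at all: it cites the result as classical (pointing to standard references) and only remarks that the inequality $m\le\lfloor n^2/4\rfloor$ follows from Nosal's spectral bound via Rayleigh's inequality $\tfrac{2m}{n}\le\lambda(G)\le\sqrt m$, in line with the paper's spectral theme; that route does not address the equality case. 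Your neighborhood-of-a-maximum-degree-vertex argument is a genuinely different, purely elementary approach: it buys a self-contained proof with the full uniqueness statement, whereas the paper's spectral derivation buys brevity and illustrates the edge-versus-eigenvalue philosophy the rest of the paper develops.
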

  
Mantel's theorem  has many interesting applications and generalizations in the literature; see, e.g., 
\cite[pp. 269--273]{AZ2014} and \cite[pp. 294--301]{Bollobas78} for standard proofs, 
\cite{BT1981,Bon1983} for generalizations, and 
 \cite{FS13, Sim13} for recent comprehensive surveys. 
In particular, 
Mantel's Theorem \ref{thmMan} 
was refined in the sense of the following stability form. 

\begin{theorem}[Erd\H{o}s] \label{thmErd}
Let $G$ be an $n$-vertex triangle-free graph. 
If $G$ is not bipartite, then 
$ e(G)\le \lfloor \frac{(n-1)^2}{4} \rfloor+1 $. 
\end{theorem}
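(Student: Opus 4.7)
The plan is to exploit a shortest odd cycle of $G$. Since $G$ is non-bipartite and triangle-free, such a cycle exists of length at least five; fix a shortest one $C = v_1 v_2 \cdots v_{2k+1} v_1$ with $k \ge 2$. I would decompose
\[ e(G) \;=\; e(C) \;+\; e\bigl(V(C),\,V(G)\setminus V(C)\bigr) \;+\; e\bigl(V(G)\setminus V(C)\bigr), \]
bounding the first summand trivially by $2k+1$ and the third by $\lfloor (n-2k-1)^2/4\rfloor$ via Theorem~\ref{thmMan} applied to the triangle-free induced subgraph on $V(G)\setminus V(C)$.

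The main step, which I expect to be the delicate point, is to show that every $u \in V(G)\setminus V(C)$ has at most two neighbors on $C$. Given two such neighbors $v_i,v_j$, let $d \in \{1,\dots,k\}$ be the shorter path-length between them along $C$. The two arcs of $C$, together with $v_i u v_j$, form cycles of lengths $d+2$ and $2k+3-d$; their sum $2k+5$ is odd, so exactly one of them is odd, and by the minimality of $C$ it must have length at least $2k+1$. A short parity inspection then forces $d=2$: for odd $d$ the odd cycle has length $d+2$, yielding $d \ge 2k-1$, incompatible with $d \le k$ when $k \ge 2$ (and $d=1$ is excluded by triangle-freeness); for even $d$ the odd cycle has length $2k+3-d$, forcing $d \le 2$, hence $d=2$. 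Thus any two neighbors of $u$ on $C$ lie at cyclic distance two. Three such vertices cannot coexist on $C_{2k+1}$ for $k \ge 2$: up to rotation one of them is $v_1$, which forces the other two into $\{v_3, v_{2k}\}$, yet the cyclic distance of $v_3$ and $v_{2k}$ equals $\min(2k-3,4) \ne 2$ for every $k\ge 2$. This gives $e\bigl(V(C),V(G)\setminus V(C)\bigr) \le 2(n-2k-1)$.

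Writing $m := n-2k-1$, the three bounds combine to $e(G) \le 2k+1+2m+\lfloor m^2/4\rfloor$. Since $km+k^2$ is an integer, one has $\lfloor (m+2k)^2/4\rfloor = \lfloor m^2/4\rfloor+km+k^2$, and a routine expansion yields
\[ \left\lfloor\tfrac{(n-1)^2}{4}\right\rfloor+1-\Bigl(2k+1+2m+\lfloor\tfrac{m^2}{4}\rfloor\Bigr) \;=\; (k-2)(m+k) \;\ge\; 0 \quad\text{for all } k\ge 2, \]
completing the proof. The identity also transparently shows that equality can only be approached when $k=2$, i.e., when the shortest odd cycle of $G$ is a pentagon, matching the well-known extremal configuration obtained by subdividing a single edge of the balanced complete bipartite graph.
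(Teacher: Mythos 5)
Your proof is correct. Note first that the paper itself does not prove Theorem~\ref{thmErd}: it cites it as a classical result of Erd\H{o}s (pointing to Bondy--Murty, Exercise~12.2.7) and only exhibits extremal constructions to show sharpness. So there is no ``paper proof'' to compare against; your argument supplies one, and it is essentially the standard textbook argument that the cited reference has in mind.

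The decomposition around a shortest odd cycle $C = C_{2k+1}$ ($k\ge 2$), the parity analysis forcing cyclic distance exactly $2$ between any two $C$-neighbors of an outside vertex, the observation that three such pairwise-distance-$2$ vertices cannot coexist on $C_{2k+1}$, and the final identity $\lfloor (n-1)^2/4\rfloor + 1 - \bigl(2k+1+2m+\lfloor m^2/4\rfloor\bigr) = (k-2)(m+k)$ with $m = n-2k-1$ are all verified correctly. One small point you should make explicit for the first summand: the decomposition $e(G) = e\bigl(G[V(C)]\bigr) + e\bigl(V(C),V\setminus V(C)\bigr) + e\bigl(G[V\setminus V(C)]\bigr)$ gives $e\bigl(G[V(C)]\bigr) = 2k+1$ only because a shortest odd cycle is necessarily chordless; a chord at cyclic distance $d$ ($2\le d\le k$) would split $C$ into cycles of lengths $d+1$ and $2k+2-d$, exactly one of which is odd and strictly shorter than $2k+1$. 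This is the same parity argument you already use for the outside vertices, so it costs one sentence. With that line added, the proof is complete.
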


 \begin{figure}[H]
\centering 
\includegraphics[scale=0.8]{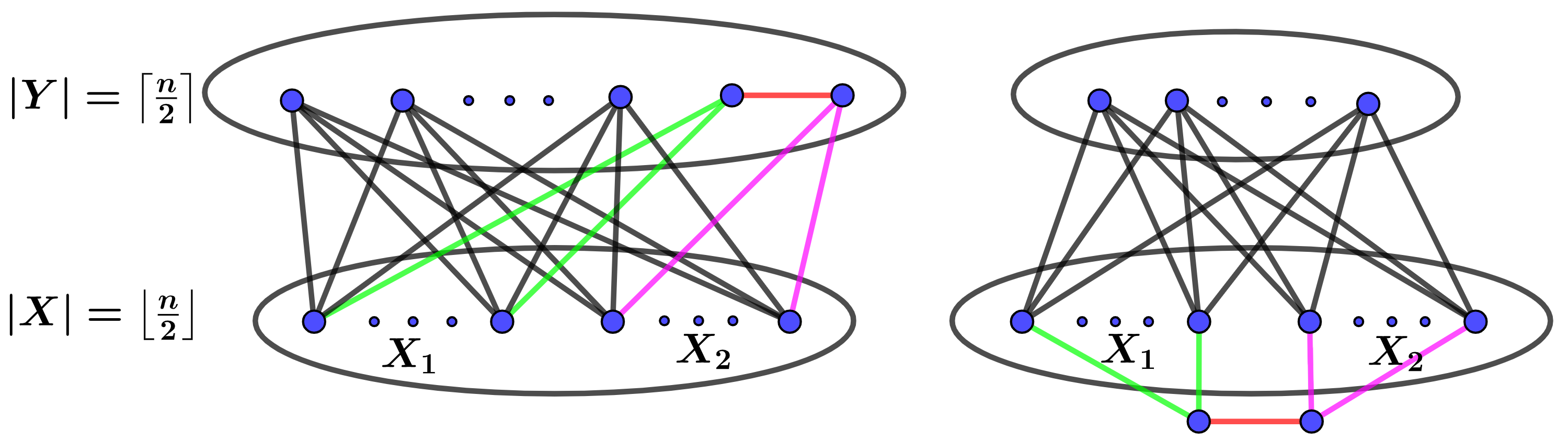}  
\caption{Extremal graphs in Theorem \ref{thmErd}.} \label{fig-1} 
\end{figure} 

It is said that this stability result attributes to Erd\H{o}s; 
see \cite[Page 306, Exercise 12.2.7]{BM2008}.  
The bound in Theorem \ref{thmErd} is best possible and 
the extremal graphs are not unique. 
To show that the bound is sharp for all integers $n$, 
we take two vertex sets 
$X$ and $Y$ with $|X|= \lfloor \frac{n}{2} \rfloor$ 
and $|Y|= \lceil \frac{n}{2} \rceil$. 
We take two  vertices $u,v \in Y$ and  join them, 
 then we put every edge between $X$ and $Y\setminus \{u,v\}$. 
We partition $X$ into two parts $X_1$ and $X_2$ {\it arbitrarily} 
(this shows that the extremal graph is not unique), 
then we connect $u$ to every vertex in $X_1$, and 
$v$ to every vertex in $X_2$;  see Figure \ref{fig-1}
This yields a graph $G$ 
which contains no triangle and 
 $e(G)= \lfloor \frac{n^2}{4}\rfloor - \lfloor \frac{n}{2}\rfloor +1
  = \lfloor \frac{(n-1)^2}{4} \rfloor +1 $. 
  Note that $G$ has a cycle $C_5$, so $G$ is not bipartite.

\subsection{The spectral extremal graph problems} 

There are various matrices that are associated with a graph, 
such as the adjacency matrix, the incidence matrix, the distance matrix, 
 the Laplacian matrix and signless Laplacian matrix. 
One of the main problems of algebraic graph theory 
is to determine the combinatorial properties of a graph that are reflected from 
the  algebraic properties of  its associated matrices. 
Let $G$ be a simple graph on $n$ vertices. 
The \emph{adjacency matrix} of $G$ is defined as 
$A(G)=[a_{ij}]_{n \times n}$ where $a_{ij}=1$ if two vertices $v_i$ and $v_j$ are adjacent in $G$, and $a_{ij}=0$ otherwise.   
We say that $G$ has eigenvalues $\lambda_1 , \lambda_2,\ldots ,\lambda_n$ if these values are eigenvalues of 
the adjacency matrix $A(G)$. 
We denote by $\lambda_i(G)$  the {\it $i$-th largest eigenvalue} of $G$. 
Let $\lambda (G)$ be the maximum  value in absolute 
 among all eigenvalues of $G$, which is 
 known as the {\it spectral radius} of a graph $G$. 
 
\medskip 
There is a rich history on the study of bounding 
the eigenvalues of a graph in terms of 
various parameters; see \cite{Alon1986} for eigenvalues and 
expanders, \cite{Chung1989, CDKL2010} for eigenvalues and diameters, 
\cite{Hong1998} for spectral radius and genus, 
\cite{BZ2001} for spectral radius and cut vertices, 
\cite{C2006, Lu2012} for regularity and eigenvalues, 
\cite{CGN2007, LSW2007} for non-regularity and spectral radius, 
\cite{BN2007jctb} for spectral radius and cliques, 
\cite{Bil2006,WE2013} for chromatic number and eigenvalues, 
\cite{LLT2007, GN2008, Niki2009jctb} for independence number and eigenvalues, 
\cite{CGH2009, OC2010} for matching,  edge-connectivity and eigenvalues, 
\cite{GLLY2016} for spanning trees and eigenvalues, 
\cite{TT2017,LN2021} for eigenvalues of outerplanar and planar graphs, 
and \cite{Tait2019} for the Colin de Verdi\`{e}re parameter, excluded minors and the spectral radius.

\medskip 
Let $G$ be a graph on $n$ vertices with $m$ edges. 
Let $A(G)$ be the adjacency matrix of $G$. 
It is well-known  that 
\begin{equation} \label{eqwell-known}
  \frac{2m}{n} \le \lambda (G)\le \sqrt{2m}. 
  \end{equation}
Indeed, the lower bound is guaranteed by Rayleigh's inequality 
$\lambda (G)\ge \bm{e}^TA(G)\bm{e} = \frac{2m}{n}$, 
where $\bm{e}=\frac{1}{\sqrt{n}}(1,1,\ldots ,1)^T\in \mathbb{R}^n$. 
The upper bound can be seen by invoking the fact that  $\lambda(G)^2 \le \sum_{i=1}^n \lambda_i^2 =\mathrm{tr}(A^2(G)) =\sum_{i=1}^n d_i=2m$.  
This upper bound was  further improved by 
Hong \cite{Hong1988} as 
\begin{equation} \label{eqHong}  \lambda (G) \le \sqrt{2m-n+1}. 
\end{equation}
We recommend the readers to \cite{HSF2001} and 
\cite{Niki2002cpc} for further extensions. 
The classical extremal graph problems 
 usually study the maximum or minimum 
number of edges that the extremal graphs can have. 
Correspondingly, 
we can study the extremal spectral problem.  
We denote by $\mathrm{ex}_{\lambda}(n,F)$ 
 the largest eigenvalue of the adjacency matrix 
in an $n$-vertex  graph that contains no copy of $F$, that is, 
\[ \mathrm{ex}_{\lambda}(n,F):=\max \bigl\{ \lambda(G): |G|=n~\text{and}~F\nsubseteq G \bigr\}. \]

In 1970, Nosal \cite{Nosal1970} 
determined the largest spectral radius of a  triangle-free graph 
in terms of the number of edges, which improved both inequalities 
(\ref{eqwell-known}) and (\ref{eqHong}) and 
also provided 
the spectral version of Mantel's Theorem \ref{thmMan}.  
Note that when we consider the result on a graph with respect to 
the given number of edges, 
we shall ignore the possible isolated vertices if there are no confusions. 

 \begin{theorem}[Nosal, 1970] \label{thmnosal}
Let $G$ be a  graph on $n$ vertices with $m$ edges. 
If $G$ is triangle-free, then 
\begin{equation}   \label{eq1}
\lambda (G)\le \sqrt{m} , 
\end{equation}
 equality holds if and only if 
$G$ is a complete bipartite graph. 
Moreover, we have 
\begin{equation} \label{eq2}
 \lambda (G)\le \lambda (K_{\lfloor \frac{n}{2}\rfloor, \lceil \frac{n}{2} \rceil } ), 
 \end{equation}
equality holds if and only if $G $ 
is a balanced complete bipartite graph 
$K_{\lfloor \frac{n}{2}\rfloor, \lceil \frac{n}{2} \rceil }$. 
\end{theorem}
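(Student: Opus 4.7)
The plan is to derive both inequalities from the single observation that $\mathrm{tr}(A^3(G)) = 6t(G)$, which vanishes precisely because $G$ is triangle-free. Let $\lambda_1 \ge \lambda_2 \ge \cdots \ge \lambda_n$ denote the eigenvalues of $A(G)$, so that $\lambda_1 = \lambda(G)$. Combining the triangle-free condition with the standard identity $\mathrm{tr}(A^2(G)) = 2m$ yields the two scalar constraints
\begin{equation*}
\sum_{i=1}^n \lambda_i^3 = 0, \qquad \sum_{i=1}^n \lambda_i^2 = 2m,
\end{equation*}
which together are enough to control $\lambda_1$.

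First I would prove \eqref{eq1}. Writing $\lambda_1^3 = -\sum_{i \ge 2} \lambda_i^3$ and applying the Perron--Frobenius bound $|\lambda_i| \le \lambda_1$, I obtain
\begin{equation*}
\lambda_1^3 \le \sum_{i \ge 2} |\lambda_i|^3 \le \lambda_1 \sum_{i \ge 2} \lambda_i^2 = \lambda_1 \bigl(2m - \lambda_1^2\bigr).
\end{equation*}
Assuming $G$ has at least one edge (otherwise the claim is trivial), dividing by $\lambda_1 > 0$ gives $\lambda_1^2 \le m$, which is exactly \eqref{eq1}.

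For the equality analysis, tracking where each of the two inequalities above can be tight forces, for every $i \ge 2$, both $\lambda_i \le 0$ and $|\lambda_i| \in \{0,\lambda_1\}$, so $\lambda_i \in \{0, -\lambda_1\}$. Combining with $\mathrm{tr}(A(G)) = 0$ pins the spectrum down to $\{\sqrt{m}, 0, \ldots, 0, -\sqrt{m}\}$, so $A(G)$ has rank $2$ and its least eigenvalue equals $-\lambda_1$. The latter already forces $G$ to be bipartite, and rank $2$ together with bipartiteness classically forces $G$ to be a complete bipartite graph (after discarding isolated vertices, as per the convention stated in the paper). Conversely, $\lambda(K_{s,t}) = \sqrt{st} = \sqrt{e(K_{s,t})}$, confirming sharpness.

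Finally, \eqref{eq2} follows by chaining \eqref{eq1} with Mantel's Theorem~\ref{thmMan}: since $m \le \lfloor n^2/4 \rfloor = \lfloor n/2 \rfloor \lceil n/2 \rceil$, we have
\begin{equation*}
\lambda (G) \le \sqrt{m} \le \sqrt{\lfloor n/2 \rfloor \lceil n/2 \rceil} = \lambda (K_{\lfloor n/2 \rfloor, \lceil n/2 \rceil}),
\end{equation*}
and equality forces equality in both steps, i.e., $G = K_{\lfloor n/2 \rfloor, \lceil n/2 \rceil}$. I expect the only genuine obstacle to be the equality case of \eqref{eq1}: one has to argue carefully, by reading off when each inequality in the chain is tight, that the whole spectrum collapses to just two nonzero eigenvalues, and then justify why a triangle-free graph with rank-$2$ adjacency matrix is complete bipartite. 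The rest is a clean trace computation together with a direct application of Mantel's theorem.
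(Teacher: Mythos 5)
Your proposal is correct and coincides with the paper's own ``counting proof'' of Theorem \ref{thmnosal}: both use $\mathrm{tr}(A^3)=6t(G)=0$ together with $\mathrm{tr}(A^2)=2m$ and the Perron--Frobenius bound $|\lambda_i|\le\lambda_1$ to get $\lambda_1^2\le m$, with the same tightness analysis yielding the spectrum $\{\sqrt{m},0,\ldots,0,-\sqrt{m}\}$ and hence a complete bipartite graph, and the same chaining with Mantel's theorem for (\ref{eq2}).
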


Nosal's theorem implies that if  $G$ is a  bipartite graph, then 
$  \lambda (G)\le \sqrt{m} $, 
 equality holds if and only if 
$G$ is a complete bipartite graph. 
On the one hand, inequality (\ref{eq1}) implies the classical Mantel Theorem \ref{thmMan}.  
Indeed, applying the Rayleigh inequality, we have 
$\frac{2m}{n}\le \lambda (G)\le  \sqrt{m}$, 
which yields $ m \le \lfloor \frac{n^2}{4} \rfloor$.  
On the other hand,  combining  (\ref{eq1}) with  Mantel's theorem,  we obtain  
$ \lambda (G)\le \sqrt{m}  \le \sqrt{\lfloor {n^2}/{4}\rfloor} =\lambda (K_{\lfloor \frac{n}{2}\rfloor, \lceil \frac{n}{2} \rceil })$. 
So inequality (\ref{eq1}) in Nosal's theorem can imply  (\ref{eq2}). 
Inequality (\ref{eq2}) is called the spectral Mantel theorem.

\medskip 
Nosal's theorem stimulated the developments of two aspects 
in spectral extremal graph theory. 
On the one hand, various extensions and generalizations on 
inequality (\ref{eq2}) in Nosal's theorem have been obtained in the literature; see, e.g., 
\cite{Wil1986,Niki2007laa2,Gui1996,KN2014} for extension 
on $K_{r+1}$-free graphs with given order;  
see \cite{BN2007jctb,Niki2009jctb} for relations 
between cliques and spectral radius 
and \cite{NikifSurvey,CZ2018,LFL2022} for surveys. 
Very recently, Lin, Ning and Wu \cite[Theorem 1.4]{LNW2021}  proved 
a  generalization on (\ref{eq2})  for non-bipartite triangle-free graphs and 
provided a spectral version of Erd\H{o}s' Theorem \ref{thmErd};  
see  \cite{LP2022second} for an 
 alternative proof and  refinement of spectral Tur\'{a}n theorem, 
and  \cite{LP2022} for more stability theorems on spectral graph problems. 
 In addition, Lin and Guo \cite{LG2021} proved 
 an extension of non-bipartite graphs without short odd cycles. 
This result was also independently proved by 
Li, Sun and Yu \cite[Theorem 1.6]{LSY2022} using a different method. 

\medskip 
On the other hand, the inequality (\ref{eq1}) in Nosal's theorem 
boosted the great interests of studying the maximum spectral radius of graphs 
in terms of the number of edges, instead of the number of vertices; 
see \cite{Niki2002cpc} for an extension on $K_{r+1}$-free graphs, 
\cite{Niki2009laa} for an analogue of $C_4$-free graphs, 
\cite{ZLS2021} for further extensions on $K_{2,r+1}$-free graphs,  
and similar results of $C_5$-free and $C_6$-free graphs as well, 
\cite{Niki2021} for an extension on $B_k$-free graphs, 
where $B_k$ denotes the book graph consisting of $k$ triangles 
sharing a common edge, 
and  \cite{LNW2021,ZS2022dm}  for refinements on non-bipartite triangle-free graphs. 
In this paper, we will focus mainly on the extremal spectral problems 
for graphs with given number of edges, which 
is becoming increasingly an important and popular topic 
in recent research on spectral graph theory.

\medskip 

In 2021, Lin, Ning and Wu \cite{LNW2021} 
 proved the following improvement on Nosal's theorem 
 by using tools from doubly stochastic matrix theory; 
 see \cite{Niki2021} for a simpler proof by using 
 elementary numerical inequalities. 
 Let $P_n$ be the path on $n$ vertices, 
  and $C_n$ be the cycle on $n$ vertices. 
 Given two graphs $G$ and $H$, we write $G\cup H$ for the 
disjoint union of $G$ and $H$. In other words, $V(G\cup H)=V(G)\cup V(H)$ 
and $E(G\cup H)=E(G) \cup E(H)$. 
For simplicity, we write $kG$ for the disjoint union of $k$ copies of $G$. 
The blow-up of a graph $G$ is a new graph obtained from 
$G$ by replacing each vertex $v\in V(G)$ with an independent set $I_v$, 
and for two vertices $u,v\in V(G)$, we add all edges between $I_u$ and $I_v$ 
whenever $uv \in E(G)$.

\begin{theorem}[Lin--Ning--Wu, 2021] \label{thm14}
Let $G$ be a triangle-free graph  with $m$ edges. Then 
\[ \lambda_1^2(G) + \lambda_2^2(G) \le m, \]
equality holds if and only if $G$ is a blow-up of a member of $\mathcal{G}$ 
in which 
\[ \mathcal{G} = \{P_2 \cup K_1, 2P_2 \cup K_1, P_4 \cup K_1, P_5 \cup K_1\}.  \]
\end{theorem}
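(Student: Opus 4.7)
The plan is to exploit the two spectral moment identities
\[
\sum_{i=1}^{n} \lambda_i^2 = \mathrm{tr}\bigl(A(G)^2\bigr) = 2m, \qquad \sum_{i=1}^{n} \lambda_i^3 = \mathrm{tr}\bigl(A(G)^3\bigr) = 6\, t(G) = 0,
\]
the vanishing of the cubic trace being precisely the triangle-free hypothesis. Since the target $\lambda_1^2 + \lambda_2^2 \le m$ is equivalent to $\lambda_1^2 + \lambda_2^2 \le \sum_{i \ge 3} \lambda_i^2$, the idea is to use the vanishing cubic moment to force enough spectral weight into the tail of the sequence.

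First I would dispose of the bipartite case, where the spectrum is symmetric about $0$. In that situation $m = \sum_{i:\lambda_i>0}\lambda_i^2 \ge \lambda_1^2 + \lambda_2^2$ is immediate, with equality iff the only nonzero eigenvalues are $\pm\lambda_1, \pm\lambda_2$; that is, $\mathrm{rank}(A) \le 4$. For non-bipartite $G$, split the indices into $P = \{i : \lambda_i > 0\}$ and $N = \{i : \lambda_i < 0\}$; then $\sum \lambda_i^3 = 0$ becomes $\sum_{i\in P}\lambda_i^3 = \sum_{i\in N}|\lambda_i|^3$. Applying the Perron bound $|\lambda_i|\le \lambda_1$ on the right and isolating $\lambda_1,\lambda_2$ on the left yields
\[
\lambda_1^3 + \lambda_2^3 \;\le\; \sum_{i\in N} |\lambda_i|^3 \;\le\; \lambda_1 \sum_{i\in N} \lambda_i^2 \;\le\; \lambda_1\bigl(2m - \lambda_1^2 - \lambda_2^2\bigr),
\]
which can be rearranged into an inequality among $\lambda_1$, $\lambda_2$, and $m$.

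The principal obstacle is that this naive chain yields only $2\lambda_1^2 + \lambda_2^2 \le 2m$, weaker than the desired $\lambda_1^2 + \lambda_2^2 \le m$ by a crucial factor. Closing this gap should require a sharper use of triangle-freeness, and I would try one of two routes. The first follows Lin--Ning--Wu: form the $n\times 2$ isometry $U = (x_1 \mid x_2)$ whose columns are unit eigenvectors for $\lambda_1,\lambda_2$, observe that the row norms of $U$ together with the edge structure of $G$ give rise to a doubly stochastic matrix, and invoke majorization (or Birkhoff--von Neumann) to recover the missing factor. The second, flagged by the paper's emphasis on Cauchy interlacing, is to delete an appropriately chosen vertex (for instance one of smallest Perron coordinate) and compare $\lambda_2(G)$ with $\lambda_1$ of the resulting triangle-free subgraph, to which Nosal's inequality can be applied.

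For equality, I would propagate equality conditions back through the inequalities; triangle-freeness combined with equality forces $\mathrm{rank}(A) \le 4$ and forces $G$ to be bipartite. A final structural step classifying triangle-free bipartite graphs of rank at most $4$ then identifies the extremal graphs as precisely the blow-ups of the four members of $\mathcal{G}$.
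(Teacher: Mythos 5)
Your setup is right and you have correctly located the obstacle: the uniform Perron bound $|\lambda_i|\le\lambda_1$, applied termwise to $\sum_{i\in N}|\lambda_i|^3$, loses exactly the factor you observe, and neither of your two candidate fixes is carried out. Of the two, the doubly-stochastic route is indeed the one Lin, Ning and Wu originally used, but it requires a genuinely nontrivial construction from the top two eigenvectors and is not merely a matter of ``invoking majorization.'' The interlacing/vertex-deletion route is less promising: interlacing together with Nosal gives only $\lambda_2(G)\le\lambda_1(G-v)\le\sqrt{e(G-v)}$, and there is no a priori control on $e(G-v)$ relative to $m-\lambda_1^2(G)$, so the pieces do not obviously combine.

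The device the paper actually uses is simpler and stays entirely on the eigenvalue side, and it is the missing idea. Suppose for contradiction that $\lambda_1^2+\lambda_2^2>m$ and (so that $\lambda_2\ge 0$) that $G\ne K_n$. From $\sum_i\lambda_i^2=2m$ one gets $\sum_{i\ge 3}\lambda_i^2=2m-\lambda_1^2-\lambda_2^2<m<\lambda_1^2+\lambda_2^2$. Combined with $\max_{i\ge 3}\lambda_i^2\le\lambda_1^2$ from Perron--Frobenius, this says precisely that the vector $(\lambda_3^2,\ldots,\lambda_n^2)$ is \emph{weakly majorized} by $(\lambda_1^2,\lambda_2^2,0,\ldots,0)$. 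Applying Karamata's inequality with the strictly increasing, strictly convex map $t\mapsto t^{3/2}$ on $[0,\infty)$ then yields $\sum_{i\ge 3}|\lambda_i|^3<\lambda_1^3+\lambda_2^3$, so $6t(G)=\sum_i\lambda_i^3\ge\lambda_1^3+\lambda_2^3-\sum_{i\ge 3}|\lambda_i|^3>0$, contradicting triangle-freeness. This replaces your linear estimate $\sum_{i\in N}|\lambda_i|^3\le\lambda_1\sum_{i\in N}\lambda_i^2$, which spreads the $\lambda_1$-weight uniformly over the tail, by the ``concentrated'' bound that majorization makes rigorous---and that concentration is exactly the missing factor. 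For the equality case, the strict-convexity clause of Karamata forces $(\lambda_3^2,\ldots,\lambda_n^2)$ to be a permutation of $(\lambda_1^2,\lambda_2^2,0,\ldots,0)$, so $\mathrm{rank}\,A(G)\le 4$ with $\lambda_n=-\lambda_1$ and $\lambda_{n-1}=-\lambda_2$; your plan of then classifying triangle-free graphs of rank at most $4$ is sound and is what the paper does by citing a known classification.
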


A  conjecture of Bollob\'{a}s and Nikiforov \cite[Conjecture 1]{BN2007jctb} states that if $G$ is a $K_{r+1}$-free graph with $m$ edges, then 
\[  \lambda_1^2(G) + \lambda_2^2(G) \le \left(1-\frac{1}{r} \right)2m. \] 
Theorem \ref{thm14} confirmed the  case 
$r=2$; see \cite{Niki2021,LSY2022} for recent progress. 
This conjecture of Bollob\'{a}s and Nikiforov remains open for the case 
$r\ge 3$. 
We remark here that $\lambda_1^2(G) + \lambda_2^2(G) \le m$ 
does not hold for the $C_4$-free graphs $G$. 
Indeed, take $G=K_{1,m-1}^+$, the graph obtained from the star $K_{1,m-1}$ 
by adding an edge into its independent set. 
For example, 
setting $m=20$, we have $\lambda_1(K_{1,19}^+) \approx 4.425$ 
and $\lambda_2(K_{1,19}^+)=0.890$, while $\lambda_1^2+ \lambda_2^2 \approx 20.372 >20$.

\medskip 
With the help of Theorem \ref{thm14}, 
Lin, Ning and Wu \cite[Theorem 1.3]{LNW2021} further proved the following refinement on (\ref{eq1}) in 
Nosal's theorem for non-bipartite triangle-free graphs with given number of edges.

\begin{theorem}[Lin--Ning--Wu, 2021] \label{thm15}
Let $G$ be a triangle-free graph with $m$ edges.  
If $G$  is non-bipartite, then 
\[  \lambda (G)\le \sqrt{m-1}, \] 
 equality holds if and only if $m=5$ and $G=C_5$. 
\end{theorem}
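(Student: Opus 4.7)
My plan is to derive Theorem~\ref{thm15} from Theorem~\ref{thm14} by locating inside $G$ an induced subgraph $H$ with $\lambda_2(H)\ge 1$ and then invoking Cauchy's interlacing theorem. This yields $\lambda_2(G)\ge \lambda_2(H)\ge 1$, and Theorem~\ref{thm14} then gives $\lambda(G)^2=\lambda_1(G)^2\le m-\lambda_2(G)^2\le m-1$, as required. The equality case $m=5$, $G=C_5$ should appear naturally at the boundary of this argument.

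The natural candidates for $H$ are $P_5$ and $2K_2$ (both satisfy $\lambda_2=1$), together with the odd cycles $C_{2k+1}$ for $k\ge 3$ (since $\lambda_2(C_{2k+1})=2\cos(2\pi/(2k+1))>1$). The structural step will proceed via the odd girth of $G$. Because $G$ is non-bipartite and triangle-free, its odd girth is at least $5$. If the odd girth is at least $7$, the shortest odd cycle must be induced (a chord would create a shorter odd cycle), so we obtain an induced $C_{2k+1}$ with $k\ge 3$ and we are done. If the odd girth equals $5$, then any $C_5\subseteq G$ is automatically induced (a chord would produce a triangle); fixing such $C=v_1v_2v_3v_4v_5$, I would perform a case analysis on the external vertices $w\in V(G)\setminus V(C)$. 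When $|N(w)\cap V(C)|=1$, say $w\sim v_1$, the set $\{w,v_1,v_5,v_4,v_3\}$ induces $P_5$. Vertices with two (necessarily non-adjacent) neighbours on $C$, pairs of external vertices lying at distance~$\ge 2$ from each other on $C$, and longer external paths can each be handled by exhibiting a suitable induced $P_5$ or induced $2K_2$.

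The main obstacle is a residual family which defeats the pure interlacing strategy: the blow-ups $C_5[k_1,\ldots,k_5]$ of $C_5$. Such graphs contain neither induced $P_5$ nor $2K_2$ nor any longer induced odd cycle; for example $C_5[2,1,1,1,1]$, on $6$ vertices with $7$ edges, satisfies $\lambda_2\approx 0.77<1$, so Cauchy interlacing alone cannot close the argument there. For this family the inequality $\lambda_1(G)^2\le m-1$ must be verified by a direct spectral calculation. An equitable partition reduces the problem to the $5\times 5$ symmetric weighted matrix $M$ with $M_{i,i+1}=\sqrt{k_ik_{i+1}}$, whose top eigenvalue equals $\lambda_1(G)$; it suffices to show $\lambda_1(M)^2\le m-1$, where $m=\sum_{i=1}^{5}k_ik_{i+1}$ (indices cyclic), with equality iff $k_1=\cdots=k_5=1$. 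Establishing this sharp estimate via the characteristic polynomial of $M$ --- and thereby pinning down $C_5$ as the unique extremal graph --- is the technical heart of the proof.
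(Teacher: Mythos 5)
Your architecture is essentially the one behind the Lin--Ning--Wu proof that this paper cites (the paper itself does not reprove Theorem \ref{thm15}; for its stronger Theorem \ref{thmZS2022} it switches to a triangle-counting argument with all eigenvalues): assume $\lambda_1\ge\sqrt{m-1}$, combine Theorem \ref{thm14} with Cauchy interlacing on an induced subgraph of second eigenvalue at least $1$ (the equality graphs in Theorem \ref{thm14} are bipartite blow-ups, so non-bipartiteness upgrades $\lambda_1^2+\lambda_2^2\ge m$ to a contradiction), force the odd girth to be $5$, and isolate the graphs that interlacing cannot touch. Your identification of that residual family is correct: a connected triangle-free non-bipartite graph with no induced $2K_2$ (hence no induced $P_5$ and no longer induced odd cycle) is exactly a blow-up of $C_5$, and such graphs can indeed have $\lambda_2<1$ --- note that $SK_{2,\frac{m-1}{2}}=C_5[1,1,1,1,\frac{m-3}{2}]$ lies in this family, so it contains all the near-extremal examples.

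The genuine gap is that the step carrying both the inequality and the entire equality characterization is only announced, not proved: you never establish $\lambda_1\bigl(C_5[k_1,\ldots,k_5]\bigr)^2\le m-1$ with equality iff $k_1=\cdots=k_5=1$, calling it ``the technical heart''. As written, the proposal proves nothing for precisely the family where the bound is tight, so it is an outline rather than a proof; the intermediate claim that every non-blow-up admits an induced $P_5$ or $2K_2$ is likewise asserted rather than carried out (though that part is routine). The blow-up estimate does not require the quintic characteristic polynomial of your quotient matrix $M$: for a vertex $u$ in the class $I_i$ one has $\sum_{v\sim u}d(v)=m-k_{i+2}k_{i+3}\le m-1$ (indices mod $5$), and $\lambda_1^2=\lambda_1(A^2)\le\max_u\sum_{v\sim u}d(v)$ by Perron--Frobenius applied to $A^2$, which is irreducible since the blow-up is connected and non-bipartite; equality forces all row sums of $A^2$ to coincide, i.e. every ``opposite'' product $k_jk_{j+2}$ equals $1$, hence all $k_i=1$, $G=C_5$ and $m=5$. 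Supplying an argument of this kind (and the short bookkeeping that equality in the interlacing branch would force one of the bipartite equality graphs of Theorem \ref{thm14}) is what is needed to turn your plan into a complete proof.
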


 In 2022, Zhai and Shu  \cite{ZS2022dm} proved a further improvement 
on Theorem \ref{thm15}. 
Before stating their result, we need to introduce the extremal graph firstly. 
For every integer $m\ge 3$, 
we denote by $\beta(m)$ the largest root of 
\[  Z(x):=x^3-x^2-(m-2)x+m-3. \]  
 It is not difficult to show that   for $m\ge 6$, we have 
 \begin{equation}  \label{eqbeta}
   \sqrt{m-2} < \beta(m) < \sqrt{m-1}. 
   \end{equation}
Furthermore, 
one can verify that $\lim_{m\to \infty} 
(\beta(m)- \sqrt{m-2}) =0$.  
On the other hand, if $m$ is odd, 
let $SK_{2,\frac{m-1}{2}} $ be the graph obtained 
from $K_{2,\frac{m-1}{2}}$ by subdividing an edge; 
see Figure \ref{fig-2} for two different drawings of  $SK_{2,\frac{m-1}{2}} $. 
In particular, for $m=5$, we have $SK_{2,2}=C_5$. 
Clearly, $SK_{2,\frac{m-1}{2}}$ is a triangle-free graph on 
 $n=\frac{m-1}{2} +3$ vertices with  
$m$ edges, and it is non-bipartite as it contains a copy of $C_5$.  
The characteristic polynomial of 
$SK_{2,\frac{m-1}{2}}$ is 
\[  \mathrm{det}(xI_n - A(SK_{2,\frac{m-1}{2}}))=x^{\frac{m-5}{2}} (x^2+x-1) 
\left( x^3-x^2-(m-2)x+m-3\right). \]   
Therefore, if $m$ is odd, then  $\beta(m)$ is
 the largest eigenvalue of  $SK_{2,\frac{m-1}{2}}$. 
 
 For convenience, we denote 
\begin{equation} \label{Hx}
H(x):=(x^2+x-1)Z(x) =x^5 -mx^3 +(2m-5)x - m+3.
 \end{equation} 
So $\beta (m)$ is also the largest root of $H(x)$.

 \begin{figure}[H]
\centering 
\includegraphics[scale=0.8]{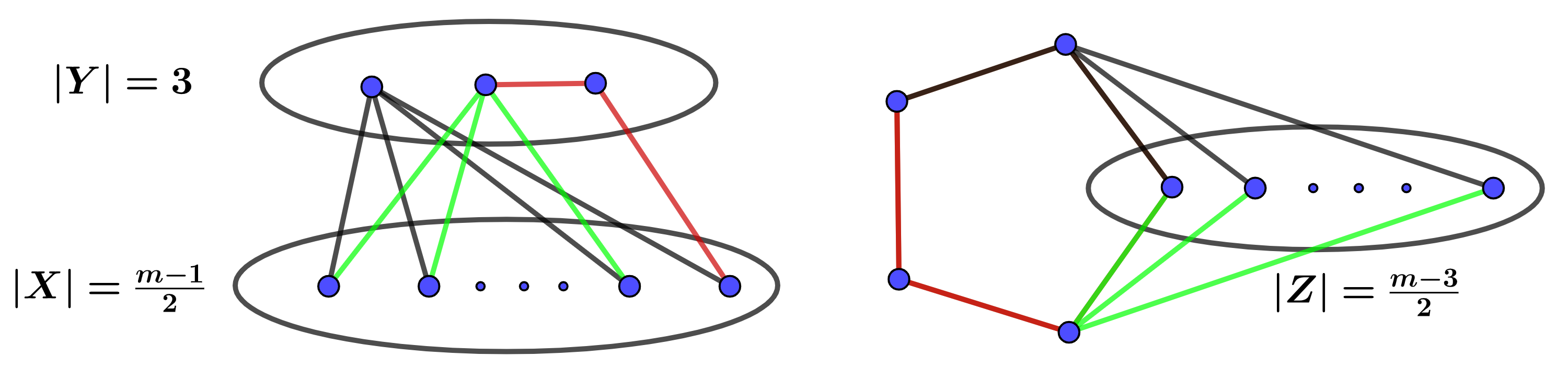}  
\caption{Two  drawings of the graph $SK_{2,\frac{m-1}{2}} $. } \label{fig-2} 
\end{figure}

The improvement of Zhai and Shu \cite{ZS2022dm} on Theorem \ref{thm15} 
can be stated as below. 

\begin{theorem}[Zhai--Shu, 2022] \label{thmZS2022}
Let $G$ be a  graph of size $m$. If $G$ is triangle-free and non-bipartite, then 
\[ \lambda (G) \le \beta (m), \]    
equality holds if and only if $m$ is odd and 
$G=SK_{2,\frac{m-1}{2}}$. 
\end{theorem}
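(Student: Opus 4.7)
The plan is to combine Theorem~\ref{thm14} (the Lin--Ning--Wu bound $\lambda_1^2+\lambda_2^2\le m$ for triangle-free graphs) with Cauchy's interlacing theorem applied to carefully chosen induced subgraphs of $G$. First I would verify the easy direction: for odd $m\ge 5$, the graph $SK_{2,(m-1)/2}$ is triangle-free, non-bipartite, has $m$ edges, and, by the characteristic polynomial factorization stated just before Theorem~\ref{thmZS2022}, its largest eigenvalue equals $\beta(m)$.

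For the upper bound, let $G$ be triangle-free and non-bipartite with $m$ edges, and suppose $\lambda:=\lambda(G)\ge\beta(m)$. Theorem~\ref{thm14} combined with inequality~(\ref{eqbeta}) yields the quantitative constraint $\lambda_2(G)^2\le m-\beta(m)^2<2$. Let $\bm{x}$ be the positive Perron eigenvector of $G$, normalized so that $\max_v x_v=1$. Since $G$ is non-bipartite triangle-free, its odd girth $g$ is at least $5$. I would first dispose of the case $g\ge 7$ separately: using the moment identities $\mathrm{tr}(A)=\mathrm{tr}(A^3)=0$ and $\mathrm{tr}(A^2)=2m$, together with $\mathrm{tr}(A^5)=10\,c_5(G)=0$ (which holds for $\{C_3,C_5\}$-free graphs), one expects to derive a strict inequality $\lambda<\beta(m)$, contradicting the hypothesis.

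In the main case $g=5$, fix an induced $C_5$ on a vertex set $W=\{w_1,\ldots,w_5\}$ and classify each exterior vertex $v\in V(G)\setminus W$ by its neighborhood type $N(v)\cap W$. Triangle-freeness restricts each such type to $\emptyset$, a singleton $\{w_i\}$, or an antipodal pair $\{w_i,w_{i+2}\}$. For each local configuration that deviates from the structure of $SK_{2,(m-1)/2}$ (in which every exterior vertex is adjacent to the same antipodal pair on $W$), apply Cauchy's interlacing to an induced subgraph $G[W\cup S]$, where $S\subseteq V(G)\setminus W$ is a small witness of the deviation, to obtain a lower bound on $\lambda_2(G)$. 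Combining these lower bounds with the constraint $\lambda_2(G)^2<2$ should rule out the deviating configuration. When interlacing alone is insufficient -- for example, a single singleton-type vertex only forces $\lambda_2(G)\ge 1$ -- the interlacing bound should be supplemented by a Perron-eigenvector comparison on $\bm{x}$ or by an edge count.

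The main obstacle is the case analysis in the last paragraph, since several ``nearly admissible'' configurations produce induced subgraphs whose $\lambda_2$ falls just below $\sqrt{2}$, rendering the pure interlacing inequality inconclusive. Handling these borderline cases by coupling interlacing with Perron-vector information is the delicate technical core of the proof, and the final identification of $G$ with $SK_{2,(m-1)/2}$ follows once every other configuration is eliminated.
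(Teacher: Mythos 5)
Your plan hinges on the constraint obtained from Theorem~\ref{thm14} plus (\ref{eqbeta}), namely $\lambda_2(G)^2\le m-\beta(m)^2<2$, and on contradicting it via interlacing with small induced witnesses. This is precisely the route the paper points out is insufficient: every configuration you would need to kill has second eigenvalue strictly below $\sqrt 2$, so no contradiction with $\lambda_2<\sqrt2$ is available. Concretely, an induced $C_7$ has $\lambda_2=2\cos(2\pi/7)\approx1.247$, and the key deviating local patterns around an induced $C_5$ (a pendant/singleton-type vertex, a vertex at distance two, two exterior vertices attached to incompatible antipodal pairs --- the graphs $H_1,H_2,H_3$ of Figure~\ref{fig-3}) all have $\lambda_2=1$. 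So neither your girth-$\ge7$ reduction nor your main case analysis closes; you acknowledge this (``borderline cases'') and defer it to an unspecified ``Perron-eigenvector comparison or edge count,'' which is exactly the missing technical core, not a detail. The paper's proof proceeds differently: it abandons the $\lambda_2$-only bound and instead uses the Ning--Zhai counting identity (Lemma~\ref{lem21}) together with Cauchy interlacing applied to \emph{all} eigenvalues of the candidate induced subgraph --- in particular the negative ones, bounded below via $\sum_i\lambda_i^2=2m$ and $\lambda_1>\sqrt{m-2}$ --- and the monotonicity Lemma~\ref{lem32}, to force $t(G)>0$, contradicting triangle-freeness; small $m$ is handled by separate case analysis.

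A second gap is the endgame. Even after the local structure is pinned down, the graphs $SK_{a,b}$ with $a,b\ge 3$ (subdivision of $K_{a,b}$ in one edge, $m=ab+1$) are perfectly admissible triangle-free non-bipartite graphs; they cannot be ``eliminated'' by any eigenvalue contradiction. One must compare their spectral radius with $\beta(m)$ directly --- in the paper via the characteristic polynomial identity $H(x)-F(x)=(2a+2\tfrac{m-1}{a}-m-3)(x-1)$ --- and show equality occurs only for $a=2$ or $b=2$. Your proposal treats this as automatic (``the final identification follows once every other configuration is eliminated''), which it is not.
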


\medskip 

The way that Lin, Ning and Wu \cite{LNW2021} proved 
Theorem \ref{thm15} is original, 
and the line of the proof of Zhai and Shu \cite{ZS2022dm} for Theorem \ref{thmZS2022} 
is technical. 
This paper is organized as follows. 
In Section \ref{sec3}, we shall present an alternative  proof of 
Theorem \ref{thmZS2022}.  The present proof is different from 
the original proof in \cite{ZS2022dm}. 
Our proof uses and develops the ideas in both \cite{LNW2021} and \cite{NZ2021}, 
 we shall make use of the information of all eigenvalues of  graphs, 
instead of the second largest eigenvalue only.  
This proof could introduce the main ideas of the approach of our paper, without some technicalities that arise in the other cases, i.e., it can help us to 
deal with the extremal spectral problem for graphs without short odd cycles. In Section \ref{sec4}, 
by applying the ideas of the proof of Theorem \ref{thmZS2022}, 
we will give further refinement 
on Theorem \ref{thmZS2022}. 
In Section \ref{sec5}, we will 
conclude this paper with some possible open  problems for interested readers.  
This paper can be regarded as a supplement of our previous article \cite{LP2022second}. 
Both of these two papers provide  extensions and generalizations 
on the results involving eigenvalues and triangles.

\section{Alternative proof of Theorem \ref{thmZS2022}} 

\label{sec3}

Recall that Theorem \ref{thmZS2022} 
is an improvement on Theorem \ref{thm15}, 
since $\beta(m) < \sqrt{m-1} $, where $\beta(m)$ is 
the largest root of $x^3-x^2-(m-2)x+m-3=0$.  
The proof of Theorem \ref{thm15}  is succinct and relies on 
Theorem \ref{thm14}, which implies that if 
$G$ is non-bipartite and $\lambda_1^2(G) + \lambda_2^2(G) \ge m$, 
then $G$ contains a triangle. 
Combining the condition in Theorem \ref{thm15}, 
we know that if $G$ satisfies $\lambda_1 (G) \ge \sqrt{m-1}$, 
then $\lambda_2(G)<1$. 
This bound on the second largest eigenvalue 
provided great convenience  to characterize the local structure 
of $G$. For instance,  
combining $\lambda_2(G)<1$ with 
 Cauchy's interlacing theorem, we obtain that 
the shortest odd cycle 
of $G$ is $C_5$. 
However, it is not sufficient to apply 
Theorem \ref{thm14}  for the proof of Theorem \ref{thmZS2022}.   
Indeed, if  $G$ is a graph satisfying $\lambda (G) \ge 
\beta (m)$, 
then invoking the fact that $\lim_{m\to \infty} (\beta(m) 
- \sqrt{m-2}) =0$, 
 we get only that 
$\lambda_2(G) <2$.  
Nevertheless, 
this bound is invalid for our purpose to describe the local structure of $G$. 
The original proof of 
Zhai and Shu \cite{ZS2022dm} for Theorem \ref{thmZS2022} 
is innovative and 
 avoided the use of Theorem \ref{thm14}, thus it 
made more detailed  structure analysis of graphs; 
see \cite{ZS2022dm} for more details. 
 
\medskip 
In what follows, we shall provide an alternative proof 
of Theorem \ref{thmZS2022}.  
Our proof grows out partially from  
the original proof \cite{LNW2021} of Theorem \ref{thm15}.  
To overcome the obstacle mentioned above, 
we shall make full use of the information of all eigenvalues 
of graphs, 
instead of the second largest eigenvalue merely.   
By applying Cauchy's interlacing theorem  of all eigenvalues, 
we will find some forbidden induced subgraphs and 
refine the structure of the desired extremal graph.  
A key idea      
relies on the eigenvalue interlacing theorem and 
 a counting lemma  \cite{NZ2021}, which established the relation 
between eigenvalues and the number of triangles of a graph. 

\medskip 
The main steps of the proof can be outlined as below.  
It introduces the main ideas of the approach of this paper 
for treating the problem involving short odd cycles. 

\begin{itemize}
\item[\ding{73}]  
First of all, applying the forthcoming Lemmas \ref{lemCauchy}, \ref{lem21} and \ref{lem32}, 
we will show that $G$ can not contain the odd cycle $C_{2k+1}$ as an induced subgraph 
for every $k\ge 3$, that is, 
$C_5$ is the shortest odd cycle in $G$; see Claim \ref{claim3.1}.

\item[\ding{78}]  
Upon more computations,  we will prove that  more substructures, e.g., 
 the graphs $H_1,H_2,H_3$ in Figure \ref{fig-3},  are also forbidden 
 as  induced subgraphs in $G$ by applying Lemmas \ref{lemCauchy}, 
 \ref{lem21} and \ref{lem32} again; see Claim \ref{claim3.2}.

\item[\ding{77}]  
Let $S$ be the set of vertices of a copy of $C_5$ in $G$. 
Using the above informations of local structure of $G$, 
we will show that every vertex outside of $S$ has exactly two neighbors 
in $S$; see Claim \ref{claim3.3}. 

\item[\ding{72}]  
Combining with the previous steps, 
we will prove that 
$G$ is isomorphic to the subdivision of 
the complete bipartite graph $K_{a,b}$ by subdividing an edge, 
where $a,b\ge 2$ are integers satisfying $m=ab+1$.  
Finally, we will show that $\lambda (SK_{a,b})$ is at most $\beta (m)$, 
equality holds if and only if $a=2$ or $b=2$. 
\end{itemize}

The following lemma is usually referred to as the eigenvalue interlacing 
theorem, also known as the Cauchy, Poincar\'{e}, or Sturm interlacing theorem. 
It states that the eigenvalues of a principal submatrix of a 
Hermitian matrix interlace those of the underlying matrix; 
see, e.g., \cite[pp. 52--53]{Zhan13} and \cite[pp. 269--271]{Zhang11}. 
It is worth noting that this eigenvalue interlacing theorem 
 provides a useful technique to extremal combinatorics and 
 plays a significant role in two breakthrough works \cite{Huang2019,JTYZZ2021}.

\begin{lemma}[Eigenvalue Interlacing Theorem] \label{lemCauchy}
Let $H$ be an $n \times n$ Hermitian matrix partitioned as 
\[ H= \begin{bmatrix} A & B \\ B^* & C  \end{bmatrix} , \]
where $A$ is an $m\times m$ principal submatrix of $H$ for some 
$m\le n$. Then for every $1\le i\le m$, 
\[  \lambda_{n-m+i}(H) \le \lambda_i (A) \le \lambda_i(H). \]
\end{lemma}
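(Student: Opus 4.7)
The plan is to deduce both inequalities from the Courant--Fischer min--max characterization of eigenvalues of a Hermitian matrix. Recall that for any $k\times k$ Hermitian matrix $M$ with eigenvalues $\lambda_1(M)\ge \cdots \ge \lambda_k(M)$, one has the two dual formulas
\[  \lambda_j(M) = \max_{\substack{V\le \mathbb{C}^k \\ \dim V = j}} \min_{\substack{x\in V \\ \lVert x\rVert =1}} x^*Mx = \min_{\substack{W\le \mathbb{C}^k \\ \dim W = k-j+1}} \max_{\substack{x\in W \\ \lVert x\rVert =1}} x^*Mx. \]
The key observation linking $A$ and $H$ is that, writing vectors of $\mathbb{C}^n$ as $(y,z)$ with $y\in \mathbb{C}^m$ and $z\in \mathbb{C}^{n-m}$, the padded vector $\tilde y := (y,0)$ satisfies $\tilde y^* H \tilde y = y^* A y$ and $\lVert \tilde y\rVert = \lVert y\rVert$, because $A$ is the top-left principal block of $H$. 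Thus any $j$-dimensional subspace $V\le \mathbb{C}^m$ lifts to a $j$-dimensional subspace $\tilde V := \{(y,0) : y\in V\} \le \mathbb{C}^n$ on which the quadratic forms defined by $A$ and $H$ agree.

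For the upper bound $\lambda_i(A)\le \lambda_i(H)$, I would select an $i$-dimensional subspace $V\le \mathbb{C}^m$ attaining the max in the first Courant--Fischer formula for $\lambda_i(A)$, so that $\lambda_i(A) = \min_{y\in V,\, \lVert y\rVert =1} y^*Ay$. Using $\tilde V$ as a feasible $i$-dimensional subspace in the Courant--Fischer formula for $\lambda_i(H)$ gives
\[ \lambda_i(H) \ge \min_{\substack{\tilde y \in \tilde V \\ \lVert \tilde y\rVert =1}} \tilde y^* H \tilde y = \min_{\substack{y\in V \\ \lVert y\rVert =1}} y^*Ay = \lambda_i(A). \]
For the lower bound $\lambda_{n-m+i}(H)\le \lambda_i(A)$, I would instead pick an $(m-i+1)$-dimensional subspace $W\le \mathbb{C}^m$ attaining the min in the \emph{dual} Courant--Fischer formula for $\lambda_i(A)$, i.e.\ $\lambda_i(A) = \max_{y\in W,\, \lVert y\rVert =1} y^*Ay$. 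The critical bookkeeping step is to verify that $\dim \tilde W = m-i+1 = n - (n-m+i) + 1$, which is precisely the subspace dimension appearing in the dual Courant--Fischer formula for $\lambda_{n-m+i}(H)$. Using $\tilde W$ as a feasible candidate there yields
\[ \lambda_{n-m+i}(H) \le \max_{\substack{\tilde y \in \tilde W \\ \lVert \tilde y\rVert =1}} \tilde y^* H \tilde y = \max_{\substack{y\in W \\ \lVert y\rVert =1}} y^*Ay = \lambda_i(A). \]

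There is no substantive obstacle in this argument; the lemma is classical and the only point requiring care is the index shift $n-m+i \leftrightarrow m-i+1$ in the dual formula, which I would check explicitly. As an alternative route, one could give a purely eigenvector-based proof by intersecting the padded span of the top $i$ eigenvectors of $A$ (an $i$-dimensional subspace of $\mathbb{C}^n$) with the span of the bottom $n-i+1$ eigenvectors of $H$, and using $i + (n-i+1) = n+1$ to extract a nonzero vector in the intersection on which the Rayleigh quotients of $A$ and $H$ can be compared; but the Courant--Fischer approach above is the shortest and treats both inequalities symmetrically.
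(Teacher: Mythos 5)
Your Courant--Fischer argument is correct: the lifting of subspaces $V\mapsto\tilde V$ preserves dimension and the quadratic form, the first inequality follows as you state, and the index check $n-(n-m+i)+1=m-i+1$ in the dual formula is exactly right, so both bounds go through. The paper itself does not prove this lemma --- it is quoted as the classical Cauchy interlacing theorem with references to the textbooks of Zhan and Zhang --- and your proof is precisely the standard min--max derivation found there, so there is nothing to reconcile.
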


Recall that $t(G)$ denotes the number of triangles in $G$.  
It is well-known that the value of $(i,j)$-entry of $A^k(G)$ 
is equal to the number of walks of length $k$ in $G$ 
starting from vertex $v_i$ 
to $v_j$. 
Since each triangle of $G$ contributes $6$ closed walks of length $3$, 
we can count the number of triangles and obtain 
\begin{equation} \label{count}
t(G)=\frac{1}{6} \sum_{i=1}^n A^3(i,i)= \frac{1}{6}\mathrm{Tr}(A^3)=\frac{1}{6}\sum\limits_{i=1}^n\lambda_i^3.
 \end{equation}

The second lemma needed in this paper
 is a triangle counting lemma  in terms of  
both the eigenvalues and the size of a graph, 
it could be seen from  \cite{NZ2021}. 
This could be viewed as a useful variant of (\ref{count}) by using $\sum_{i=1}^n \lambda_i^2 = \mathrm{tr}(A^2)=
\sum_{i=1}^n d_i=2m$. 

\begin{lemma}  (see \cite{NZ2021}) \label{lem21} 
Let $G$ be a graph on $n$ vertices with $m$ edges. 
If $\lambda_1\ge \lambda_2 \ge \cdots \ge \lambda_n$ 
are all eigenvalues of $G$, then  
\[ t(G)=
\frac{1}{6} \sum_{i=2}^n (\lambda_1 + \lambda_i) \lambda_i^2 + 
\frac{1}{3}(\lambda_1^2-m)\lambda_1.  \] 
\end{lemma}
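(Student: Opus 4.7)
The plan is to derive the identity by purely algebraic manipulation from two classical trace formulas: the triangle-counting identity $6t(G)=\sum_{i=1}^n\lambda_i^3$ recalled in \eqref{count}, together with the second-moment identity $\sum_{i=1}^n \lambda_i^2 = \mathrm{tr}(A^2(G)) = 2m$. The key observation is that the expression on the right-hand side is simply a rewriting of $\sum_{i=1}^n\lambda_i^3$ in which the contribution of $\lambda_1$ has been redistributed using the second-moment relation, so that the factor $(\lambda_1^2-m)$ naturally emerges.

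First I would isolate the leading eigenvalue and write $6t(G)=\lambda_1^3+\sum_{i=2}^n \lambda_i^3$. For each index $i\ge 2$, I would use the elementary splitting $\lambda_i^3=(\lambda_1+\lambda_i)\lambda_i^2-\lambda_1\lambda_i^2$ to rearrange the tail sum as
\[
\sum_{i=2}^n \lambda_i^3 \;=\; \sum_{i=2}^n (\lambda_1+\lambda_i)\lambda_i^2 \;-\; \lambda_1\sum_{i=2}^n \lambda_i^2.
\]
Substituting $\sum_{i=2}^n \lambda_i^2=2m-\lambda_1^2$ on the right and then adding back the separated $\lambda_1^3$ term gives
\[
6t(G) \;=\; \sum_{i=2}^n (\lambda_1+\lambda_i)\lambda_i^2 \;+\; 2\lambda_1(\lambda_1^2 - m),
\]
and dividing by $6$ yields the claimed identity.

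There is no real obstacle here; the whole argument is a short bookkeeping exercise, and the only insight is spotting that the factor $(\lambda_1^2-m)$ on the right-hand side is a shorthand for the second-moment constraint $\sum_{i\ge 2}\lambda_i^2=2m-\lambda_1^2$. The value of isolating this identity as a lemma is conceptual rather than technical: when $\lambda_1$ is close to $\sqrt{m}$ the correction term $\tfrac{1}{3}(\lambda_1^2-m)\lambda_1$ is small, and the triangle count is controlled almost entirely by $\sum_{i=2}^n(\lambda_1+\lambda_i)\lambda_i^2$, whose sign can be analyzed term by term using the Perron--Frobenius bound $|\lambda_i|\le\lambda_1$. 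This is precisely the leverage the lemma provides in the subsequent proof of Theorem \ref{thmZS2022}, where one needs to extract triangle-freeness information from spectral data without discarding the contributions of $\lambda_3,\ldots,\lambda_n$.
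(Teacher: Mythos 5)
Your proof is correct and coincides with the derivation the paper intends: the lemma is described there simply as a variant of the trace identity $6t(G)=\sum_{i=1}^n\lambda_i^3$ obtained by substituting $\sum_{i=1}^n\lambda_i^2=2m$, which is exactly your splitting $\lambda_i^3=(\lambda_1+\lambda_i)\lambda_i^2-\lambda_1\lambda_i^2$ followed by $\sum_{i\ge 2}\lambda_i^2=2m-\lambda_1^2$. Nothing further is needed.
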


For convenience, we next introduce a function. 

\begin{lemma} \label{lem32}
Let $f(x) :=(\sqrt{m-2} + x)x^2$. If $a\le x \le b \le 0$, 
then 
\[  f(x)\ge \min\{ f(a),f(b) \}. \]  
\end{lemma}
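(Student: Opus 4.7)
The plan is to reduce the inequality to a single-variable monotonicity analysis. First I would expand $f(x) = \sqrt{m-2}\,x^2 + x^3$ and compute
\[ f'(x) = 3x^2 + 2\sqrt{m-2}\,x = x\bigl(3x + 2\sqrt{m-2}\bigr), \]
so that the critical points of $f$ on $\mathbb{R}$ are exactly $x = 0$ and $x = -\tfrac{2}{3}\sqrt{m-2}$. Since $m\ge 2$ is understood (otherwise $\sqrt{m-2}$ is not defined), this is a genuine pair of distinct critical points whenever $m > 2$, and they coincide at $x=0$ in the degenerate case $m=2$.

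Next I would carry out a sign analysis of $f'$ on the half-line $(-\infty, 0]$. One has $f'(x) > 0$ for $x \in \bigl(-\infty,\, -\tfrac{2}{3}\sqrt{m-2}\bigr)$ and $f'(x) < 0$ for $x \in \bigl(-\tfrac{2}{3}\sqrt{m-2},\, 0\bigr)$. Consequently $f$ is strictly increasing on $\bigl(-\infty,\, -\tfrac{2}{3}\sqrt{m-2}\bigr]$ and strictly decreasing on $\bigl[-\tfrac{2}{3}\sqrt{m-2},\, 0\bigr]$, with a unique local maximum at $x = -\tfrac{2}{3}\sqrt{m-2}$ on the half-line $(-\infty, 0]$.

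Now take any interval $[a,b]$ with $a \le b \le 0$. The restriction of $f$ to $[a,b]$ attains its minimum at some point $x_0\in[a,b]$. The only interior critical point of $f$ that can possibly lie in $[a,b]$ is $-\tfrac{2}{3}\sqrt{m-2}$, and by the previous paragraph this is a local \emph{maximum} of $f$ on $(-\infty,0]$, so it cannot be a minimizer. Hence $x_0\in\{a,b\}$, which gives $f(x) \ge f(x_0) = \min\{f(a), f(b)\}$ for every $x \in [a,b]$, as required.

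I do not foresee any serious obstacle: the statement is essentially a one-variable calculus exercise, and the only care needed is to write the critical-point analysis cleanly so that the lemma can later be invoked without interruption inside the proof of Theorem \ref{thmZS2022}, where $x$ will play the role of a non-positive eigenvalue trapped between two known bounds $a$ and $b$.
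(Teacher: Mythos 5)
Your proof is correct and follows essentially the same route as the paper's: compute $f'$, note $f$ is increasing on $(-\infty,-\tfrac{2}{3}\sqrt{m-2}]$ and decreasing on $[-\tfrac{2}{3}\sqrt{m-2},0]$, and conclude the minimum over any subinterval of $(-\infty,0]$ is attained at an endpoint. The paper states this monotonicity analysis in one line; you simply spell out the derivative computation and critical-point reasoning more explicitly.
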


\begin{proof}
Since $f(x)$ is monotonically increasing when 
$x\in (-\infty, -\frac{2}{3}\sqrt{m-2})$, 
and monotonically decreasing when $x\in [-\frac{2}{3}\sqrt{m-2}, 0]$. 
Thus the desired statement holds immediately. 
\end{proof}

It is the time to show an alternative proof of Theorem \ref{thmZS2022}.

\begin{proof}[{\bf Proof of Theorem \ref{thmZS2022}}]
Suppose that $G$ contains no triangle and $G$ is non-bipartite 
such that $\lambda (G) \ge \beta (m)$. 
We shall prove that $m$ is odd and $G=SK_{2,\frac{m-1}{2}}$.  
Without loss of generality, we may assume that 
$G$ has the maximum value of spectral radius. 
First of all, we can see that 
$G$ must be connected.  
Otherwise, we can choose $G_1$ and $G_2$ as two different components, 
where $G_1$ attains the spectral radius of $G$, 
by identifying two vertices from $G_1$ and $G_2$ respectively, 
we get a new graph with larger  spectral radius, 
which is a contradiction\footnote{
There is another way to get a contradiction.  
We  delete an edge within $G_2$, and then add an edge between $G_1$ and $G_2$. This operation will also lead to a new graph with larger spectral radius.}.  
 It is not hard to verify the desired theorem for $m\le 10$, 
 since we can consider whether $C_7\subseteq G$ or 
 $C_9 \subseteq G$ by a standard case analysis. 
Next, we shall consider the case $m\ge 11$. 
The proof proceeds by the following three claims.

\begin{claim} \label{claim3.1}
The shortest odd cycle in $G$ has length $5$. 
\end{claim}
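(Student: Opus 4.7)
The plan is a proof by contradiction: assume the shortest odd cycle of $G$ has length $2k+1$ with $k\ge 3$. Since any chord of a shortest odd cycle would produce a strictly shorter odd cycle, the copy of $C_{2k+1}$ in $G$ must be induced. I would first apply Cauchy's interlacing theorem (Lemma~\ref{lemCauchy}) to the $(2k+1)\times(2k+1)$ principal submatrix of $A(G)$ indexed by this induced cycle. The spectrum of $C_{2k+1}$ is $\{2\cos(2\pi j/(2k+1)):0\le j\le 2k\}$; in particular, the second-largest eigenvalue $\alpha_k:=2\cos(2\pi/(2k+1))$ and the smallest eigenvalue $-\beta_k:=-2\cos(\pi/(2k+1))$ each have multiplicity two. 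Interlacing therefore yields
\[
\lambda_2(G),\,\lambda_3(G)\;\ge\;\alpha_k \qquad\text{and}\qquad \lambda_{n-1}(G),\,\lambda_n(G)\;\le\;-\beta_k.
\]

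Next I would invoke the triangle-counting identity (Lemma~\ref{lem21}). Since $t(G)=0$,
\[
\sum_{i=2}^n (\lambda_1+\lambda_i)\lambda_i^2 \;=\; 2\lambda_1(m-\lambda_1^2) \;<\; 4\lambda_1,
\]
where the strict inequality uses $\lambda_1\ge\beta(m)>\sqrt{m-2}$, and every summand on the left is nonnegative because $\lambda_1\ge|\lambda_i|$. Keeping only the two terms $i=2,3$ and using that $x\mapsto(\lambda_1+x)x^2$ is strictly increasing on $[0,\infty)$, we get $2\alpha_k^2(\lambda_1+\alpha_k)\le 4\lambda_1$, equivalently $\alpha_k^3\le(2-\alpha_k^2)\lambda_1$. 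When $k\ge 4$ we have $\alpha_k\ge 2\cos(2\pi/9)>\sqrt{2}$, so $2-\alpha_k^2<0$ while $\alpha_k^3>0$, a direct contradiction; this rules out every $k\ge 4$.

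For the remaining case $k=3$, the inequality just obtained only forces $\lambda_1\ge \alpha_3^3/(2-\alpha_3^2)$, which is compatible with $\lambda_1\le\sqrt{m}$ once $m$ is moderately large, so the positive eigenvalues alone are insufficient. I would therefore exploit the two very negative eigenvalues $\lambda_{n-1},\lambda_n\le-\beta_3$ produced by interlacing. Because $\lambda_1\ge\sqrt{m-2}$, one has $(\lambda_1+\lambda_i)\lambda_i^2\ge(\sqrt{m-2}+\lambda_i)\lambda_i^2=f(\lambda_i)$, and Lemma~\ref{lem32} then supplies a uniform lower bound for $f$ on the admissible sub-interval containing $\lambda_{n-1}$ and $\lambda_n$. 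Adding this contribution to $2\alpha_3^2(\lambda_1+\alpha_3)$ from $\lambda_2,\lambda_3$ and to the non-negative middle terms should push the left-hand side strictly past $4\lambda_1$, contradicting the counting identity and ruling out $k=3$ as well.

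The hard part is exactly this $k=3$ step. The pointwise bound $(\lambda_1+\lambda_i)\lambda_i^2\ge 0$ is attained when $\lambda_i=-\lambda_1$, and non-bipartiteness only provides $\lambda_n>-\lambda_1$ with no a priori quantitative gap. The role of Lemma~\ref{lem32}, combined with the second-moment constraint $\sum_{i\ge 2}\lambda_i^2=2m-\lambda_1^2\le m+2$ and with the sharpened form $m-\lambda_1^2\le 2-1/(\beta(m)-1)$ coming from $\lambda_1\ge\beta(m)$, is to trap $\lambda_{n-1}$ and $\lambda_n$ in a sub-interval of $[-\lambda_1,-\beta_3]$ on which $f$ is genuinely bounded away from zero, so that the contribution of these two eigenvalues is not wasted. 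Carrying out this quantitative ``gap'' estimate and confirming that the resulting lower bound strictly exceeds the sharpened upper bound for $2\lambda_1(m-\lambda_1^2)$ is the only subtle point in the whole claim.
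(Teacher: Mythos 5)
Your overall strategy --- interlacing on an induced shortest odd cycle, the triangle-counting identity of Lemma~\ref{lem21}, and Lemma~\ref{lem32} together with the second-moment constraint to control the two most negative eigenvalues --- is exactly the route the paper takes. Your treatment of the case $k\ge 4$ (odd girth at least $9$) is a genuine improvement: from $\sum_{i\ge 2}(\lambda_1+\lambda_i)\lambda_i^2=2(m-\lambda_1^2)\lambda_1<4\lambda_1$, keeping only the nonnegative contributions of $\lambda_2,\lambda_3\ge\alpha_k$ gives $\alpha_k^3<(2-\alpha_k^2)\lambda_1$, which is impossible once $\alpha_k=2\cos\bigl(2\pi/(2k+1)\bigr)>\sqrt2$, i.e.\ $k\ge4$. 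The paper only works out the $C_7$ case numerically and then says ``similarly, by the monotonicity of $\cos x$''; your argument disposes of all $s\ge 9$ uniformly with no further computation, which is cleaner.

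The gap is the case $k=3$, i.e.\ $C_7$, which you explicitly leave open. This is not a minor loose end: it is precisely the bulk of the paper's proof of Claim~\ref{claim3.1}. The paper obtains the needed gap quantitatively by interlacing all four negative eigenvalues of $C_7$ ($\lambda_{n-3},\lambda_{n-2}\le-0.445$ and $\lambda_{n-1},\lambda_n\le-1.801$), feeding the corresponding squares into $\sum_i\lambda_i^2=2m$ to trap $\lambda_n$ and $\lambda_{n-1}$ in intervals of the form $(-\sqrt{m-c},-1.801]$, applying Lemma~\ref{lem32} on those intervals, and then verifying $\tfrac16\bigl(f(\lambda_2)+f(\lambda_3)+f(\lambda_{n-1})+f(\lambda_n)\bigr)-\tfrac23\lambda_1>\tfrac16\bigl(5.104\sqrt{m-2}-4\sqrt m+3.868\bigr)>0$ for $m\ge 11$ (after disposing of $m\le 10$ directly). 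Your plan for this is sound, and in fact a back-of-the-envelope check shows that even using only $\lambda_2,\lambda_3,\lambda_{n-1},\lambda_n$ (without the extra $\lambda_{n-3},\lambda_{n-2}$ that the paper uses to tighten the trap) the final inequality still closes because $f(\lambda_{n-1})+f(\lambda_n)=\Omega(\sqrt m)$ overwhelms the $O(1)$ slack; but you neither carry out the trapping estimates nor verify the final numerical inequality, and the ``sharpened form'' $m-\lambda_1^2\le 2-1/(\beta(m)-1)$ you invoke, while correct, is not by itself the mechanism that closes the gap. A complete proof needs the explicit Lemma~\ref{lem32} bounds and the terminal arithmetic check.
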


\begin{proof}[Proof of Claim \ref{claim3.1}]
Since $G$ is non-bipartite, let $s$ be the length of a shortest odd cycle 
in $G$. Since $G$ is triangle-free and non-bipartite, we have $s\ge 5$ and 
$\lambda (G)< \sqrt{m}$ by Theorem \ref{thmnosal}. 
Moreover, a shortest odd cycle $C_s \subseteq G$ 
must be an induced odd cycle. By computation, 
we know that the eigenvalues of $C_s$ are 
$ 2\cos \frac{2\pi k}{s}$, where $k=0,1,\ldots ,s-1$. 
Since $C_s$ is an induced copy in $G$, 
we know that $A(C_s)$ is a principal submatrix of $A(G)$. 
Lemma \ref{lemCauchy} implies that 
$ \lambda_{n-s+i}(G) \le  \lambda_i(C_s) \le \lambda_i (G)$ 
for every $i\in [s]$, 
where $\lambda_i$ means the $i$-th largest eigenvalue. 
We next show that $s=5$. 
We denote by $\lambda_1,\lambda_2,\ldots ,\lambda_n$ 
the eigenvalues of $G$ for simplicity.

Suppose on the contrary that $C_7$ is an induced odd cycle of $G$,
 then $\lambda_2 \ge \lambda_2(C_7) = 2\cos \frac{2\pi}{7}\approx 1.246$  
 and $\lambda_3 \ge \lambda_3(C_7)=2 \cos \frac{12\pi}{7} \approx 1.246$. 
Evidently, we get 
 \[  f(\lambda_2) \ge f(1.246) 
 \ge 1.552 \sqrt{m-2} + 1.934, \]
 and 
  \[  f(\lambda_3) \ge f(1.246)  \ge 1.552 \sqrt{m-2} + 1.934. \]
  Our goal is to get a contradiction by applying Lemma \ref{lem21} and showing $t(G)>0$. 
  It is not sufficient to obtain $t(G)>0$ by using the  positive eigenvalues of $C_7$ only. 
  Next, we are going to consider the negative eigenvalues of $C_7$. 
For $i\in \{4,5,6,7\}$, we know that $\lambda_i(C_7)<0$. 
The Cauchy interlacing theorem yields 
$\lambda_{n-3} \le \lambda_4(C_7)=-0.445$,
$\lambda_{n-2} \le \lambda_5(C_7)=-0.445$, 
$\lambda_{n-1} \le \lambda_6(C_7)=-1.801$ 
and $\lambda_{n} \le \lambda_7(C_7)=-1.801$. 
To apply Lemma \ref{lem32}, we need to 
find the lower bounds on $\lambda_i $ for each 
$i\in \{n-3,n-2,n-1,n\}$. We know from (\ref{eqbeta}) that 
$\lambda_1 \ge \beta (m) > \sqrt{m-2}$, and  then 
$\lambda_n^2 \le 2m - (\lambda_1^2 +\lambda_2^2 + 
\lambda_3^2 + \lambda_{n-3}^2 + \lambda_{n-2}^2 + 
\lambda_{n-1}^2 ) < 2m-(m-2 + 6.744) = m- 4.744$, 
which implies $-\sqrt{m-4.744} < \lambda_n \le -1.801$. By Lemma \ref{lem32}, we get 
\[ f(\lambda_n) \ge 
\min\{f(-\sqrt{m-4.744}), f(-1.801)\}> \sqrt{m-2}  \]
for every $m\ge 11$. Similarly, 
we have $\lambda_{n-1}^2+\lambda_n^2 \le 
2m - (\lambda_1^2 +\lambda_2^2 + 
\lambda_3^2 + \lambda_{n-3}^2 + \lambda_{n-2}^2) 
< m- 1.501$. Combining with $\lambda_{n-1}^2 \le \lambda_n^2$, 
we get $-\sqrt{(m-1.501)/2} < \lambda_{n-1} \le -1.801$. 
By Lemma \ref{lem32}, we obtain 
\[  f(\lambda_{n-1}) \ge 
\min \{f(-\sqrt{(m-1.501)/2}), f(-1.801)\} > \sqrt{m-2}  \]
for every $m\ge 9$. 
Note that $
\sqrt{m} > \lambda_1  \ge \beta (m) > \sqrt{m-2}$. 
By Lemma \ref{lem21}, we get 
\begin{align*} 
t(G)  &> \frac{1}{6} (f(\lambda_2) + f(\lambda_3) + f(\lambda_n) 
+ f(\lambda_{n-1}) )
 - \frac{2}{3}\lambda_1 \\
 & >\frac{1}{6} (5.104 \sqrt{m-2} - 4\sqrt{m}+ 3.868) >0 . 
\end{align*} 
This is a contradiction. Similarly, we can prove 
by applying the monotonicity of $\cos x$ that 
$C_s$ can not be an induced subgraph of $G$ for each odd integer $s\ge 7$. 
Thus we get $s=5$. 
\end{proof}

Let $S=\{u_1,u_2,u_3,u_4,u_5\}$ be the set of vertices of 
 a copy of $C_5$ in $G$.  We define the graphs $H_1,H_2$ 
 and $H_3$ as in Figure \ref{fig-3}. 
 The eigenvalues of these graphs can be seen in Table \ref{tab-1}. 
 To avoid unnecessary calculations, we did not attempt to get the best bound on the size of graph in the  proof. 
Next, we  consider the case $m\ge 514$  
in the remaining proof.

 \begin{figure}[H]
\centering 
\includegraphics[scale=0.75]{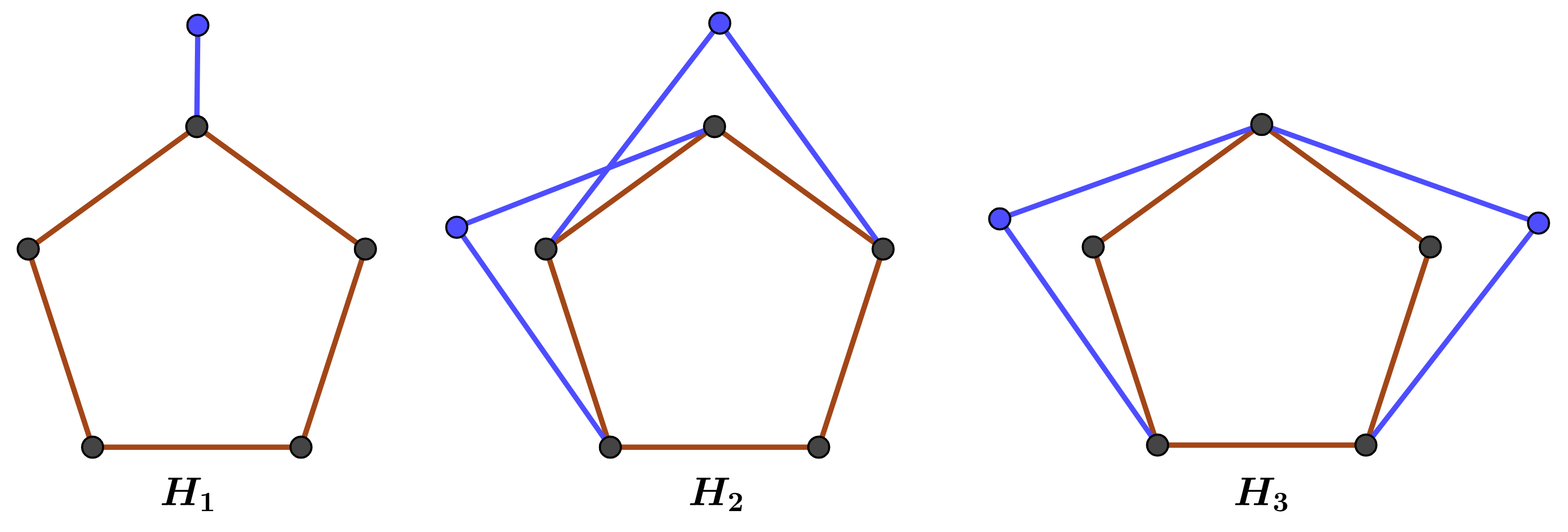} 
\caption{Some forbidden induced subgraphs in $G$. } \label{fig-3} 
\end{figure}

\begin{table}[H]
\centering 
\begin{tabular}{cccccccc}
\toprule
    & $\lambda_1$  & $\lambda_2$  &  $\lambda_3$ 
    &  $\lambda_4$ &  $\lambda_5$ &  $\lambda_6$ & $\lambda_7$ \\ 
\midrule
 $C_7$ & 2 & 1.246 & 1.246 & $-0.445$ & $-0.445$ & $-1.801$ &
 $-1.801$ \\ 
$H_1$ & 2.115  &  1 & 0.618 & $-0.254$ & $-1.618$ & $-1.860$ 
 \\ 
$H_2$ & 2.641  &  1 &  0.723 & 0.414 & $-0.589$ & $-1.775$ & $-2.414$  \\ 
$H_3$ & 2.681 & 1 & 0.642 & 0 & 0 & $-2$ & $-2.323$   \\ 
\bottomrule 
\end{tabular} 
\caption{Eigenvalues of forbidden induced subgraphs.} \label{tab-1}
\end{table}

\begin{claim} \label{claim3.2}
 $G$ does not contain any graph of 
 $\{H_1,H_2,H_3\}$ as an induced 
 subgraph. 
 \end{claim}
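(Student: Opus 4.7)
The plan is to treat each forbidden subgraph $H_i$ ($i=1,2,3$) by the same template used to prove Claim~\ref{claim3.1}. For each $i$, I would suppose towards a contradiction that $H_i$ is an induced subgraph of $G$ and aim to derive $t(G) > 0$ via Lemma~\ref{lem21}, contradicting the triangle-freeness of $G$. The three ingredients are already at hand: Cauchy's interlacing theorem (Lemma~\ref{lemCauchy}) to transfer eigenvalue information from $A(H_i)$ to $A(G)$; the identities $\sum_j \lambda_j^2 = 2m$ together with $\lambda_1(G) \ge \beta(m) > \sqrt{m-2}$ (from~(\ref{eqbeta})) and $\lambda_1(G) < \sqrt{m}$ (from Nosal's Theorem~\ref{thmnosal}) to produce two-sided bounds on the most negative eigenvalues; and Lemma~\ref{lem32} to convert those two-sided bounds into lower bounds on $f(\lambda_j)=(\sqrt{m-2}+\lambda_j)\lambda_j^2$.

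Concretely, reading the positive eigenvalues of $H_i$ from Table~\ref{tab-1} and applying interlacing gives lower bounds on $\lambda_2(G),\lambda_3(G),\ldots$, which directly yield positive lower bounds on the corresponding $f(\lambda_j)$ of order $\sqrt{m-2}$. Interlacing also gives upper bounds $\lambda_{n-|H_i|+j}(G) \le \lambda_j(H_i)$ on the bottom eigenvalues. The matching lower bounds on each $\lambda_{n-j}(G)$ come from the trace identity: after subtracting $\lambda_1^2 > m-2$ and the squared lower bounds already established on the top eigenvalues, the remaining budget for $\sum_{j \in J}\lambda_j^2$ is at most a small constant, which forces each $|\lambda_{n-j}|$ to be bounded above. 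Applying Lemma~\ref{lem32} to each such $\lambda_{n-j}$ then yields a lower bound on $f(\lambda_{n-j})$ that again grows like $\sqrt{m-2}$. The remaining middle eigenvalues satisfy $|\lambda_i|\le \sqrt{m-2}$ by the same budget argument and therefore contribute $f(\lambda_i)\ge 0$ for free. Substituting into the inequality $6t(G) \ge \sum_{j\ge 2} f(\lambda_j) - 4\lambda_1$ that underlies the proof of Claim~\ref{claim3.1} will then produce $t(G)>0$.

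The main obstacle is purely numerical bookkeeping: for each $H_i$ one must verify that the coefficient of $\sqrt{m-2}$ in the lower bound on $\sum_{j \ge 2} f(\lambda_j)$ strictly exceeds the coefficient $4$ of $\sqrt{m}$ appearing in the error term $-4\lambda_1 \ge -4\sqrt{m}$. The tightest case is expected to be $H_3$, since two of its seven eigenvalues vanish and its negative spectrum $\{-2,\,-2.323\}$ has the smallest absolute values among the three graphs, leaving the smallest usable $f$-contribution; this is what effectively pins down the threshold $m \ge 514$ recorded just before the statement of the claim. Confirming the strict inequality for each $H_i$ under this threshold completes the proof.
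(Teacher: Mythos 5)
Your plan reproduces the paper's proof of Claim~\ref{claim3.2} essentially verbatim: for each $H_i$, interlacing gives eigenvalue bounds, the trace identity $\sum_j\lambda_j^2=2m$ together with $\lambda_1>\sqrt{m-2}$ supplies the two-sided bounds needed for Lemma~\ref{lem32}, and Lemma~\ref{lem21} (using $\lambda_1<\sqrt m$ for the last term) then forces $t(G)>0$; the only additional observation needed, which you correctly note, is that the unmentioned middle eigenvalues contribute $f(\lambda_i)\ge 0$ because each either is nonnegative or exceeds $-\sqrt{m-2}$. One small slip in your bookkeeping: the tight case that fixes the threshold $m\ge 514$ is $H_1$, not $H_3$ --- the paper's final estimates have $\sqrt{m-2}$-coefficients $4.093$, $4.193$, $6.112$ for $H_1,H_2,H_3$ respectively, so $H_3$ (whose two strongly negative eigenvalues $-2,-2.323$ give large $f$-contributions) is in fact the \emph{loosest} case, while $H_1$'s coefficient $4.093$ is barely above the $4$ coming from $-4\lambda_1\ge -4\sqrt m$.
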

 
 \begin{proof}[Proof of Claim \ref{claim3.2}]
 Suppose on the contrary that $G$ contains $H_i$ 
 as an induced subgraph for some $i\in \{1,2,3\}$. 
 To obtain a contradiction, we shall show $t(G)>0$ by applying 
 Lemma \ref{lem21}. 
 We first consider the case that $H_1$ is an induced subgraph 
 in $G$. 
 The Cauchy interlacing theorem implies 
 $\lambda_{n-6+i}(G) \le \lambda_i(H_1) \le \lambda_i (G)
 $ for every $i\in [6]$. 
 We denote $\lambda_i=\lambda_i(G)$ for short. 
Obviously, we have 
 \[  f(\lambda_2 ) \ge f(1)=\sqrt{m-2} +1, \]
 and 
 \[ f(\lambda_3 ) \ge f(0.618)\ge 0.381\sqrt{m-2} + 0.236.  \]
 We next consider the negative eigenvalues of $G$. 
 The Cauchy interlacing theorem implies 
 $\lambda_{n-2} \le \lambda_4(H_1)=-0.254$ 
 and $\lambda_{n-1} \le \lambda_5(H_1)=-1.618$ 
 and $\lambda_n \le \lambda_6 (H_1)=-1.860$. 
Moreover, we get from (\ref{eqbeta}) that 
$\lambda_1 \ge \beta (m) > \sqrt{m-2}$ and 
$\lambda_n^2  
 \le 2m- (\lambda_1^2 + \lambda_2^2 + 
 \lambda_3^2  + \lambda_{n-2}^2 + \lambda_{n-1}^2 ) 
 \le 2m-(m-2+4.064)=m-2.064$, which implies $-\sqrt{m-2.064} < \lambda_n \le 
 -1.860$. By Lemma \ref{lem32}, we have 
 \[  f(\lambda_n ) \ge 
 \min\{ f(-\sqrt{m-2.064}), f(-1.860)\} > 0.031 \sqrt{m-2}. \]
Secondly, since $\lambda_{n-1}^2 + \lambda_n^2  
\le 2m- (\lambda_1^2 +\lambda_2^2 + 
\lambda_3^2 + \lambda_{n-2}^2 ) 
< m+0.553$ and $\lambda_{n-1}^2 \le \lambda_n^2$, 
we get $-\sqrt{(m+0.553)/2}< \lambda_{n-1} \le -1.618$. 
 By Lemma \ref{lem32},  we get 
 \[  f(\lambda_{n-1}) \ge 
 \min\{ f(-\sqrt{(m+0.553)/{2}}), f(-1.618)\} > 2.617 \sqrt{m-2} - 4.235 \] 
 for every $m\ge 12$. 
 Moreover, we have $-\sqrt{{(m+0.618)}/{3}}<\lambda_{n-2}\le -0.254$ and then 
 \[ f(\lambda_{n-2}) \ge \min \{f(-\sqrt{({m+0.618})/{3}}), 
 f(-0.254)\} >0.064\sqrt{m-2} -0.016  \]
 for every $m\ge 4$. 
 By Lemma \ref{lem21}, we get that for  $m\ge 514$, 
\begin{align*} 
t(G)  &> \frac{1}{6}(f(\lambda_2) + f(\lambda_3) + f(\lambda_{n-2})
+f(\lambda_{n-1}) + f(\lambda_n )) - \frac{2}{3} \lambda_1 \\ 
&>\frac{1}{6}(4.093\sqrt{m-2}-4\sqrt{m} -2.015)>0,
\end{align*}
which is a contradiction.

If $H_2$ is an induced subgraph of $G$,  
 then we get similarly that
$\lambda_2 \ge 1, \lambda_3\ge 0.723$ and 
$\lambda_4 \ge 0.414$. Then 
\begin{gather*} 
f(\lambda_2) \ge f(1)= \sqrt{m-2} +1, \\ 
 f(\lambda_3) \ge f(0.723) \ge 0.522 \sqrt{m-2} +0.377, 
\end{gather*}
and 
\[ f(\lambda_4) \ge f(0.414)\ge 0.171\sqrt{m-2} + 0.07.  \]
The negative eigenvalues of $H_2$ imply that 
$\lambda_{n-2}\le -0.589$, $\lambda_{n-1}\le -1.775$ 
and $\lambda_n \le -2.414$. 
Due to $\lambda_n^2 \le 2m- (\lambda_1^2+\lambda_2^2 + \lambda_3^2 + 
\lambda_4^2 + \lambda_{n-2}^2 +\lambda_{n-1}^2) 
< 2m - (m-2 + 5.191)=m-3.191$, 
 we  get  $-\sqrt{m-3.191} \le \lambda_n \le -2.414$.   
 Lemma \ref{lem32} gives 
\[  f(\lambda_n) \ge \min\{ f(-\sqrt{m-3.191}), f(-2.414)\} 
> 0.5\sqrt{m-2} \]
for every $m\ge8$. In addition, 
we have $-\sqrt{({m-0.041})/{2}} \le \lambda_{n-1} \le -1.775$ and 
\[ f(\lambda_{n-1}) \ge \min\{ f(-\sqrt{{(m-0.041)}/{2}}), f(-1.775)\} 
> 2\sqrt{m-2}  \]
for every $m\ge 17$. 
By Lemma \ref{lem21}, we obtain that for $m\ge 6$,  
\begin{align*} 
t(G)  &> \frac{1}{6}(f(\lambda_2) + f(\lambda_3) + f(\lambda_4) + 
f(\lambda_{n-1}) + f(\lambda_n )) - \frac{2}{3} \lambda_1 \\
&> \frac{1}{6}(4.193\sqrt{m-2}-4\sqrt{m} +1.447)>0, 
\end{align*}
which is also a contradiction.

If $H_3$ is an induced subgraph of $G$, 
 then we get $\lambda_2\ge 2$ and $\lambda_3\ge 0.642$. 
 Then 
\[  f(\lambda_2) \ge f(1)=\sqrt{m-2} +1 \]
and 
\[  f(\lambda_3) \ge f(0.642) \ge 0.412\sqrt{m-2} +0.264. \] 
Moreover, Cauchy's interlacing theorem gives $\lambda_{n-1}\le -2$ 
and $\lambda_n \le -2.323$. 
Since $\lambda_n^2\le 2m-(\lambda_1^2+\lambda_2^2+\lambda_3^2 + \lambda_{n-1}^2)< 2m-(m-2 +5.412)=m-3.412$, we get 
$-\sqrt{m-3.412} < \lambda_n \le -2.323$. Then 
\[  f(\lambda_n) \ge \min\{ f(-\sqrt{m-3.412}), f(-2.323)\} \ge 
0.7\sqrt{m-2}.  \]
Similarly,  
we have $-\sqrt{(m+0.587)/2} < \lambda_{n-1}\le -2$ and 
\[  f(\lambda_{n-1}) \ge \min\{ f(-\sqrt{(m+0.587)/2}), f(-2)\} 
\ge 4\sqrt{m-2} -8. \]
By Lemma \ref{lem21}, we obtain 
\begin{align*} 
t(G)  &> \frac{1}{6}(f(\lambda_2) + f(\lambda_3) + 
f(\lambda_{n-1}) + f(\lambda_n )) - \frac{2}{3} \lambda_1 \\
&> \frac{1}{6}( 6.112\sqrt{m-2}-4\sqrt{m} -6.736 )>0, 
\end{align*}
 which is a contradiction. 
 \end{proof}
 
 Let $N(S):=\cup_{u\in S} N(u)$ be the union of 
 neighborhoods of vertices of  $S$. We denote by $d_S(v)=|N(v)\cap S|$ the 
 number of neighbors of $v$ in the set $S$. 
 
 \begin{claim} \label{claim3.3}
 $V(G)=S \cup N(S)$ and 
 $d_S(v)=2$ for every $v\in N(S)$. 
 \end{claim}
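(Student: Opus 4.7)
The plan is to split Claim~\ref{claim3.3} into three successive restrictions on $d_S(v)$: first bound $d_S(v)\le 2$ by triangle-freeness, then rule out $d_S(v)=1$ using Claim~\ref{claim3.2}, and finally show that no vertex lies outside $S\cup N(S)$ by a new Cauchy interlacing argument in the same spirit as Claim~\ref{claim3.2}.

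For the first restriction, note that if any vertex $v\notin S$ were adjacent to two consecutive vertices of the $5$-cycle on $S$, then together with the $C_5$-edge between them this would form a triangle; hence $N(v)\cap S$ is an independent set of $C_5$, and since $\alpha(C_5)=2$ we obtain $d_S(v)\le 2$. For the second, suppose some $v\in N(S)\setminus S$ satisfies $d_S(v)=1$, say $v\sim u_1$. Then $S\cup\{v\}$ induces a $C_5$ with a pendant. A direct calculation shows that this graph has characteristic polynomial $x^6-6x^4+8x^2-2x-1$, whose spectrum matches Table~\ref{tab-1} for $H_1$; therefore $S\cup\{v\}\cong H_1$, contradicting Claim~\ref{claim3.2}. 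Hence every $v\in N(S)\setminus S$ satisfies $d_S(v)=2$.

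The main obstacle is the last step, namely $V(G)\subseteq S\cup N(S)$. Suppose otherwise. Since $G$ is connected, we may pick $w_1\in N(S)\setminus S$ and $w_2\in V(G)\setminus(S\cup N(S))$ with $w_1\sim w_2$ by taking the last two vertices (before $S$) of a shortest path from any vertex outside $S\cup N(S)$ to $S$. By the previous step we may assume $w_1\sim u_1, u_3$, and the induced subgraph $H^{\ast}$ on $S\cup\{w_1,w_2\}$ has $7$ vertices and $8$ edges (five from $C_5$, two from $w_1$ to $S$, and the edge $w_1w_2$). Exploiting the mirror symmetry $u_1\leftrightarrow u_3$, $u_4\leftrightarrow u_5$ which fixes $u_2, w_1, w_2$, the spectrum of $H^{\ast}$ splits into a $5$-dimensional symmetric part (producing the eigenvalue $1$ and the four roots of $\lambda^4-6\lambda^2-\lambda+2=0$) and a $2$-dimensional antisymmetric part (producing the two roots of $\lambda^2+\lambda-1=0$). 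Numerically, the eigenvalues of $H^{\ast}$ are approximately $2.465,\,1,\,0.618,\,0.510,\,-0.699,\,-1.618,\,-2.276$.

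By Lemma~\ref{lemCauchy} applied to $H^{\ast}\subseteq G$, these bounds transfer to $G$: $\lambda_2(G)\ge 1$, $\lambda_3(G)\ge (\sqrt{5}-1)/2$, $\lambda_4(G)\ge 0.510$, together with $\lambda_{n-2}(G)\le -0.699$, $\lambda_{n-1}(G)\le -(\sqrt{5}+1)/2$, and $\lambda_n(G)\le -2.276$. Plugging these into Lemma~\ref{lem21} and bounding the negative eigenvalues by Lemma~\ref{lem32} in exactly the same pattern as in the proof of Claim~\ref{claim3.2}, a routine estimate yields $t(G)>0$ for all sufficiently large $m$, contradicting triangle-freeness; the finitely many small values of $m$ can be disposed of by direct inspection. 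This will complete the proof. The hard part is identifying the correct induced subgraph $H^{\ast}$ and computing its spectrum precisely enough that the interlacing bounds feeding into Lemma~\ref{lem21} beat the $\tfrac{2}{3}\lambda_1$ term by a positive margin.
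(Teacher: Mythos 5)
Your proposal is correct in substance, and its first two parts ($d_S(v)\le 2$ by triangle-freeness, and $d_S(v)=1$ forcing an induced $H_1$) coincide with the paper's argument; the genuine difference is in the last part, excluding vertices at distance at least $2$ from $S$. The paper handles this with no new computation at all: with $N_S(v)=\{u_1,u_3\}$ and $v'$ nonadjacent to $S$, it simply discards $u_2$ and observes that $\{v',v,u_3,u_4,u_5,u_1\}$ induces a $C_5$ with a pendant edge, i.e.\ a copy of $H_1$, which Claim \ref{claim3.2} has already forbidden — an immediate contradiction with no new threshold on $m$. You instead treat the full seven-vertex configuration $H^{\ast}$ as a fresh forbidden subgraph and re-run the interlacing-plus-counting machinery of Lemmas \ref{lemCauchy}, \ref{lem21} and \ref{lem32}. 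Your spectral data checks out: the mirror symmetry splits the spectrum as you claim, the characteristic polynomial of $H^{\ast}$ factors as $(x^{2}+x-1)(x-1)(x^{4}-6x^{2}-x+2)$, and the listed approximate eigenvalues are accurate; plugging $\lambda_2\ge 1$, $\lambda_3\ge 0.618$, $\lambda_4\ge 0.510$, $\lambda_{n-2}\le -0.699$, $\lambda_{n-1}\le -1.618$, $\lambda_n\le -2.276$ into the scheme of Claim \ref{claim3.2} yields a coefficient of roughly $5.1$ in front of $\sqrt{m-2}$ against the $4\sqrt{m}$ loss from the $\tfrac{2}{3}\lambda_1$ term, so $t(G)>0$ does follow comfortably in the range $m\ge 514$ where the surrounding proof operates. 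So your route works, but it is strictly heavier: it requires computing a new spectrum, carrying out one more numerical estimate (which you assert rather than perform), and it saddles you with a ``sufficiently large $m$ plus small-case inspection'' caveat that the paper's one-line reduction to $H_1$ avoids entirely. A minor presentational point: identifying $S\cup\{v\}$ with $H_1$ by matching its characteristic polynomial against Table \ref{tab-1} is fine as a sanity check, but the cleaner justification is simply that $C_5$ with a pendant edge is the graph $H_1$ of Figure \ref{fig-3}; cospectrality alone would not in general certify isomorphism.
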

 
 \begin{proof}[Proof of Claim \ref{claim3.3}]
First of all, we prove that $d_S(v)=2$ for each vertex $v\in N(S)$. 
Without loss of generality, 
 we may assume that $v\in N(u_1)$. 
 If $d_S(v)\ge 3$, then  there exists $i\in [5]$ 
 such that $\{v,u_i,u_{i+1}\}$ 
 forms a triangle in $G$, a contradiction. 
 If $d_S(v)=1$, then $S\cup \{v\}$ 
 induces a copy of $H_1$, a contradiction. 
 This implies that $d_S(v)=2$ for every $v\in N(S)$.  
 Next we prove that $V(G)=S\cup N(S)$. 
 Otherwise, if there is a vertex $v' \in V(G) \setminus (S\cup N(S))$, then $v'$ has distance at least $2$ from $S$. 
 We may assume that $v'vu_1$ is an induced $P_3$ such that 
 $v'u_i \notin E(G)$ for every $i\in [5]$. 
 From the above discussion,  we know from $vu_1\in E(G)$ that 
 $d_S(v)=2$ . 
 By symmetry, we may assume that $N_S(v)=\{u_1,u_3\}$. 
 Since $G$ is triangle-free and $v'u_i \notin E(G)$ 
 for every $i\in [5]$, we can see that 
 $\{v',v,u_3,u_4,u_5,u_1\}$ induces a copy of $H_1$, 
 a contradiction. 
 Thus, we conclude that $V(G)=S \cup N(S)$ and 
 $d_S(v)=2$ for every $v\in N(S)$. 
 \end{proof}
 
 Since $m\ge 11$, we can fix a vertex $v\in N(S)$ and 
assume that $N_S(v)=\{u_1,u_3\}$. 
For each $w\in V(G)\setminus (S\cup \{v\})$, 
since $G$ contains no triangles and  no  
$H_3$ as an induced subgraph, we know that 
$N_S(w)\neq \{u_3,u_5\}$ and $N_S(w)\neq \{u_4,u_1\}$. 
It is possible that $N_S(w)=\{u_1,u_3\},\{u_2,u_4\}$ or 
$\{u_5,u_2\}$. Furthermore, 
if $N_S(w)=\{u_1,u_3\}$, then $wv \notin E(G)$, since $G$  contains no  triangles; 
if $N_S(w)=\{u_2,u_4\}$, then 
$wv \in E(G)$, since $G$ contains no induced copy of 
$H_2$. 
We denote $N_{i,j}=\{w\in V(G)\setminus S : N_S(w)=\{u_i,u_j\}\}$. Note that $G$ has no induced copy of $H_3$, 
 there are at least one empty set 
in $\{N_{2,4},N_{5,2}\}$. 
 If $N_{2,4}=\varnothing$ and $N_{5,2}=\varnothing$, 
then $V(G)\setminus S = N_{1,3}$. Thus  $m$ is odd and 
$G=SK_{2,\frac{m-1}{2}}$. 
Without loss of generality, 
if  $N_{2,4} \neq \varnothing$, then 
$V(G)\setminus S= N_{1,3} \cup N_{2,4}$. 
Moreover, $N_{1,3}$ and $N_{2,4}$ induce a complete 
bipartite subgraph in $G$. 
We denote $A=N_{1,3} \cup \{u_2,u_4\}$ 
and $B=N_{2,4}\cup \{u_3,u_1\}$. 
Clearly, we have $|A|=a \ge 2$ and $|B|=b\ge 2$.  
Then we  observe that 
$G$ is isomorphic to the subdivision of 
the complete bipartite graph $K_{a,b}$ by 
subdividing the edge $u_1u_4$ of $K_{a,b}$, 
and $m=e(G)= ab +1$. 
By a direct computation, we  get that 
$\lambda(G)\le \beta(m)$, equality holds if and only if 
$a=2$ or $b=2$, and thus $m$ is odd and $G=SK_{2,\frac{m-1}{2}}$.   The detailed computations are stated below.
The characteristic polynomial of $G=SK_{a,b}$ is 
\begin{align*}
&  \mathrm{det}(xI_n - A(SK_{a,b})) \\
&= x^{a+b-4}
\left(x^5 - (ab+1) x^3+ (3ab -2a -2b +1)x -2ab +2a +2b -2 \right). 
\end{align*}  
Hence $\lambda(G)$ is the largest root of 
\[  F(x):=x^5-m x^3+ (3m-2 -2a -2\tfrac{m-1}{a})x -2m +2a +2\tfrac{m-1}{a}.\]   
Recall in (\ref{Hx}) that $\beta(m)$ denotes the largest root of 
$ H(x)$.  
We can easily verify that 
\[ H(x)- F(x)=(2a+2\tfrac{m-1}{a} -m-3)(x-1),  \]
which yields $H(x)\le F(x)$ for every $x\ge 1$. Then we get $H(\lambda (G)) \le F(\lambda (G))=0$, 
which implies $\lambda (G) \le \beta(m)$. This completes the proof. 
\end{proof}

\noindent 
{\bf Remark.} 
The Nosal Theorem \ref{thmnosal} 
asserts that if $G$ is a graph with $\lambda (G) \ge \sqrt{m}$, 
then either $G$ contains a triangle, or $G$ is a complete bipartite graph. 
Very recently, Ning and Zhai \cite{NZ2021} proved an elegant spectral counting result, 
which states that if $G$ is an $m$-edge graph with 
$\lambda (G) \ge \sqrt{m}$, then $G$ has at least $\lfloor \frac{\sqrt{m} -1}{2}\rfloor $ 
triangles, unless $G$ is a   complete bipartite graph. 
Clearly, this saturation result is a generalization of Nosal's theorem 
as well as a spectral analogue of a result of Rademacher. 
A natural question is whether the counting result analogous to Theorem \ref{thmZS2022} is true. 
More precisely, if $G$ is non-bipartite with $\lambda (G) \ge \beta (m)$, 
then it seems possible that $G$ has at least $\Omega(\sqrt{m})$ triangles, 
unless $G=SK_{2,\frac{m-1}{2}}$. 

Although we can see from the proof of Theorem \ref{thmZS2022}  that 
many cases can yield the conclusion that $G$ has at least $\Omega (\sqrt{m})$ triangles, 
the answer for the above question is surprisingly {\sc negative}. 
Taking $G=K_{1,m-1}^+$ as the graph obtained from the star 
$K_{1,m-1}$ by adding an edge into its independent set, 
we can see that $G$ is not bipartite and 
$\lambda (K_{1,m-1}^+) > \sqrt{m-1} >\beta (m)$, while 
$G$ has only one triangle and $G\neq SK_{2,\frac{m-1}{2}}$. 
Note that the graph $K_{1,m-1}^+$ has $m$ edges on $m$ vertices. 
Moreover, we can show that 
$\lambda (K_{1,m-1}^+)$ is the largest root of the equation 
\[   x^3 -x^2 - (m-1)x + m-3=0.  \]  
For $m=4,5,6,7,8$, we can verify that $\lambda (K_{1,m-1}^+)> \sqrt{m}$;  
while for $m=9$, we get $\lambda (K_{1,8}^+)=3=\lambda(K_{1,9})$. 
For $m\ge 11$, we can check that 
$\lambda (K_{1,m-1}^+) < \sqrt{m}$.

\section{Graphs without short odd cycles}

\label{sec4}

Let $S_{3}(K_{a,b})$ denote the 
graph obtained from the complete bipartite graph 
$K_{a,b}$ by replacing an edge with a five-vertex path $P_{5}$, 
that is, introducing three new vertices on an edge. 
Clearly,  the shortest odd cycle in $S_{3}(K_{a,b})$ has length seven. 

We next consider the further extension of Theorem \ref{thmZS2022} 
for graphs with given size and no short odd cycles. 
For each integer $m\ge 7$, we denote by $\gamma (m)$ 
the largest root of 
\begin{equation} \label{Lx}
 L(x):=x^7 - mx^5 + (4m-14)x^3 - (3m-14)x -m +5. 
 \end{equation}
It is not difficult to check that 
\begin{equation}  \label{eq-gamma}
 \sqrt{m-4} < \gamma (m) \le \sqrt{m-3}. 
 \end{equation}
 Indeed, we observe that 
\begin{align*} 
 L(\sqrt{m-4}) < x(x^6 - mx^4 + (4m-14)x^2 - (3m-14)) \big|_{x=\sqrt{m-4}} = 
 -m+6 \le 0, 
 \end{align*}
 which leads to $\sqrt{m-4} < \gamma (m)$. 
For every $m\ge 7$, we have 
\[   L(\sqrt{m-3}) = \sqrt{m-3}(m(m-11) +29) -m +5 \ge 0,\]
 equality 
holds only for $m=7$. 
Combining with $L'(x)=7x^6-5mx^4 +3(4m-14)x^2 -(3m-14) \ge 0$ 
for every $x\ge \sqrt{m-3}$, 
we get  $L(x)\ge L(\sqrt{m-3})\ge 0$ for every $x\ge \sqrt{m-3}$, 
which implies 
$\gamma (m)\le \sqrt{m-3}$.

Moreover, if $m$ is odd, 
let $S_3(K_{2,\frac{m-3}{2}})$ be the graph obtained from 
the complete bipartite graph $K_{2,\frac{m-3}{2}}$ 
by subdividing an edge into a path of length $4$, i.e., 
putting $3$ new vertices on an edge; see Figure \ref{fig-4}. 
In particular, for $m=7$, we have $S_3(K_{2,2})=C_7$. 
Clearly, $S_3(K_{2,\frac{m-3}{2}})$ has $n=\frac{m-3}{2} +5$ vertices 
and $m$ edges. 
Moreover, 
$S_3(K_{2,\frac{m-3}{2}})$ contains no copy of 
both $C_3$ and $C_5$, but it has a copy of $C_7$ and so it is non-bipartite. 
Upon computation, 
the characteristic polynomial of $S_3(K_{2,\frac{m-3}{2}})$ is given as 
\[  \det (xI_n - A(S_3(K_{2,\frac{m-3}{2}}))) 
= x^{\frac{m-7}{2}} \bigl( x^7 - mx^5 + (4m-14)x^3 - (3m-14)x -m +5
\bigr).  \]
Hence, if $m$ is odd, then $\gamma (m)$ is the largest eigenvalue of 
$S_3(K_{2,\frac{m-3}{2}})$. 

 \begin{figure}[H]
\centering 
\includegraphics[scale=0.8]{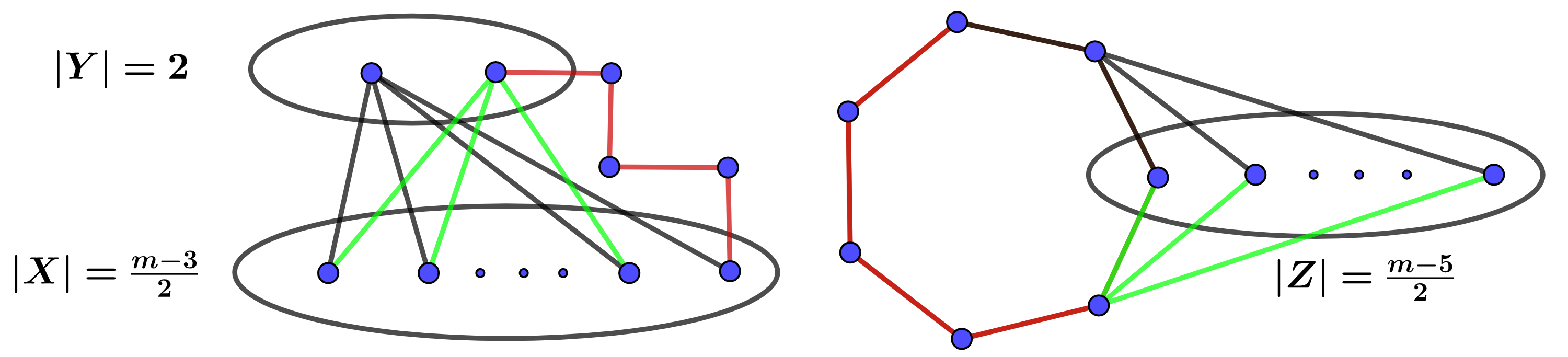}  
\caption{Two  drawings of the graph $SK_{2,\frac{m-3}{2}}$.} \label{fig-4} 
\end{figure}

Note that the extremal graph $SK_{2,\frac{m-1}{2}}$ in Theorem \ref{thmZS2022} contains a copy of $C_5$. 
In this section, we will prove a refinement on Theorem \ref{thmZS2022}. 
To be more specific, we will determine the largest spectral radius 
for  $C_3$-free and $C_5$-free non-bipartite graphs. 
To proceed, we need to introduce a lemma.

\begin{lemma}  \label{lem42}
Let $a,b\ge 2$ and $m$ be integers with $m=ab+4$. 
If $G$ is one of the $m$-edge graphs  obtained from 
$S_3(K_{a,b})$ by adding an edge  to one vertex, then 
$ \lambda (G) < \gamma (m)$. 
\end{lemma}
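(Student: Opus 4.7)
The plan is to perform a case analysis on where the pendant edge is attached, exploit the automorphism-induced equitable partition in each case to reduce the spectral-radius computation to a small quotient matrix, and compare the resulting characteristic polynomial with $L(x)$ at $x = \gamma(m)$. First, I would label the vertices of $S_3(K_{a,b})$ so that the two parts of the original $K_{a,b}$ are $A = \{u_1, \ldots, u_a\}$ and $B = \{v_1, \ldots, v_b\}$, and the subdivided $P_5$ between $u_1$ and $v_1$ uses the three new vertices $w_1, w_2, w_3$, so $u_1 w_1 w_2 w_3 v_1$ is an induced path. Since adding a pendant edge preserves connectedness, $G$ is connected; let $p$ denote the new leaf and $v^{\star} \in V(S_3(K_{a,b}))$ its support. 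Up to the $A\leftrightarrow B$ and $w_1\leftrightarrow w_3$ symmetries, the vertex $v^{\star}$ may be taken to lie in one of the five classes $\{u_1\}$, $\{w_1\}$, $\{w_2\}$, $A\setminus\{u_1\}$, or $B\setminus\{v_1\}$, and I would treat these in turn.

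In each case, $G$ admits an equitable partition with at most eight cells, obtained from the six singletons $\{u_1\}, \{v_1\}, \{w_1\}, \{w_2\}, \{w_3\}, \{p\}$ together with $A\setminus\{u_1\}$ and $B\setminus\{v_1\}$, suitably refined so that $v^{\star}$ is isolated whenever it lies in one of the two large cells. Let $Q_G$ denote the corresponding quotient matrix. Since the partition is equitable and $G$ is connected, $\lambda(G) = \lambda(Q_G)$, and $\lambda(G)$ is the largest root of $\phi_G(x) := \det(xI - Q_G)$, a monic polynomial whose coefficients depend polynomially on $a$ and $b$. I would compute $\phi_G(x)$ explicitly in each case, use $m = ab+4$ to substitute $b = (m-4)/a$, and then use the defining identity $L(\gamma(m)) = 0$ to eliminate the leading powers $\gamma(m)^7, \gamma(m)^5, \gamma(m)^3, \gamma(m)$ and reduce $\phi_G(\gamma(m))$ to a low-degree expression in $\gamma(m)$.

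To close the argument, using $a, b \ge 2$ together with the sharp bounds $\sqrt{m-4} < \gamma(m) \le \sqrt{m-3}$ established just before the lemma, I would verify that this reduced expression is strictly positive. Since $\phi_G$ has positive leading coefficient and its largest root equals $\lambda(G)$, the strict inequality $\phi_G(\gamma(m)) > 0$ forces $\lambda(G) < \gamma(m)$, which is exactly the conclusion sought.

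The main obstacle will be the final positivity check: the reduced form of $\phi_G(\gamma(m))$ will be a sum of terms of mixed signs, and verifying positivity uniformly over all admissible factorizations $ab = m-4$ with $a, b \ge 2$ will demand careful algebraic manipulation, likely supplemented by a separate numerical verification for a handful of small values of $m$ where the asymptotic estimates on $\gamma(m)$ are loose. I would expect the case $v^{\star}\in A\setminus\{u_1\}$ (symmetrically $B\setminus\{v_1\}$) to be the most delicate, since attaching the pendant to a "high-multiplicity" vertex breaks the bipartite symmetry and produces the largest quotient matrix; the cases $v^{\star}\in\{w_1\},\{w_2\},\{u_1\}$ should be cleaner and can be used as a warm-up to calibrate the reduction-modulo-$L$ bookkeeping.
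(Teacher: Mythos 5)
Your overall framework---reduce to a small characteristic polynomial $\phi_G$ via an equitable partition, compare it against $L$, and conclude---tracks the paper's approach in spirit, and your case analysis on the attachment vertex is the right first move (the paper treats $7$ unsymmetrized cases; your $5$ classes are fine once you note that swapping the roles of $a$ and $b$ is a relabeling of free parameters, not an automorphism of the graph). However, the final deduction as you state it is a non-sequitur: from $\phi_G$ monic with largest root $\lambda(G)$, the inequality $\phi_G(\gamma(m))>0$ does \emph{not} force $\gamma(m)>\lambda(G)$, since $\gamma(m)$ could sit in a gap between two roots of $\phi_G$ below $\lambda(G)$. You would additionally need to show that $\gamma(m)$ exceeds the second-largest real root of $\phi_G$ (or that $\phi_G$ is strictly increasing on an interval containing $\gamma(m)$ that reaches $\lambda(G)$), and that check is not automatic.

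The paper sidesteps this by evaluating $L$ at $\lambda(G)$ rather than $\phi_G$ at $\gamma(m)$. Concretely, in the illustrated case it computes a degree-$8$ polynomial $E(x)$ with $E(\lambda(G))=0$, shows that $E(x)-xL(x)$ is a small polynomial that is positive for all $x\ge\sqrt{(a-1)b}$, and then uses the subgraph bound $\lambda(G)\ge\lambda(K_{a-1,b})=\sqrt{(a-1)b}$ to conclude $\lambda(G)\,L(\lambda(G))<E(\lambda(G))=0$, hence $L(\lambda(G))<0$. Because $L$ is monic and $\gamma(m)$ is its \emph{largest} root, $L(\lambda(G))<0$ genuinely forces $\lambda(G)<\gamma(m)$ with no wiggle-room issue. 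If you want to keep your reduction-modulo-$L$ bookkeeping, the fix is to reframe it this way: establish a comparison of the form $E(x)>c(x)\,L(x)$ for some positive factor $c(x)$ valid on a half-line $[\rho,\infty)$, exhibit $\rho$ as the spectral radius of an explicit subgraph of $G$ so that $\lambda(G)\ge\rho$, and then read off $L(\lambda(G))<0$. As written, your positivity check at the single point $\gamma(m)$ does not close the argument.
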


 \begin{figure}[H]
\centering 
\includegraphics[scale=0.8]{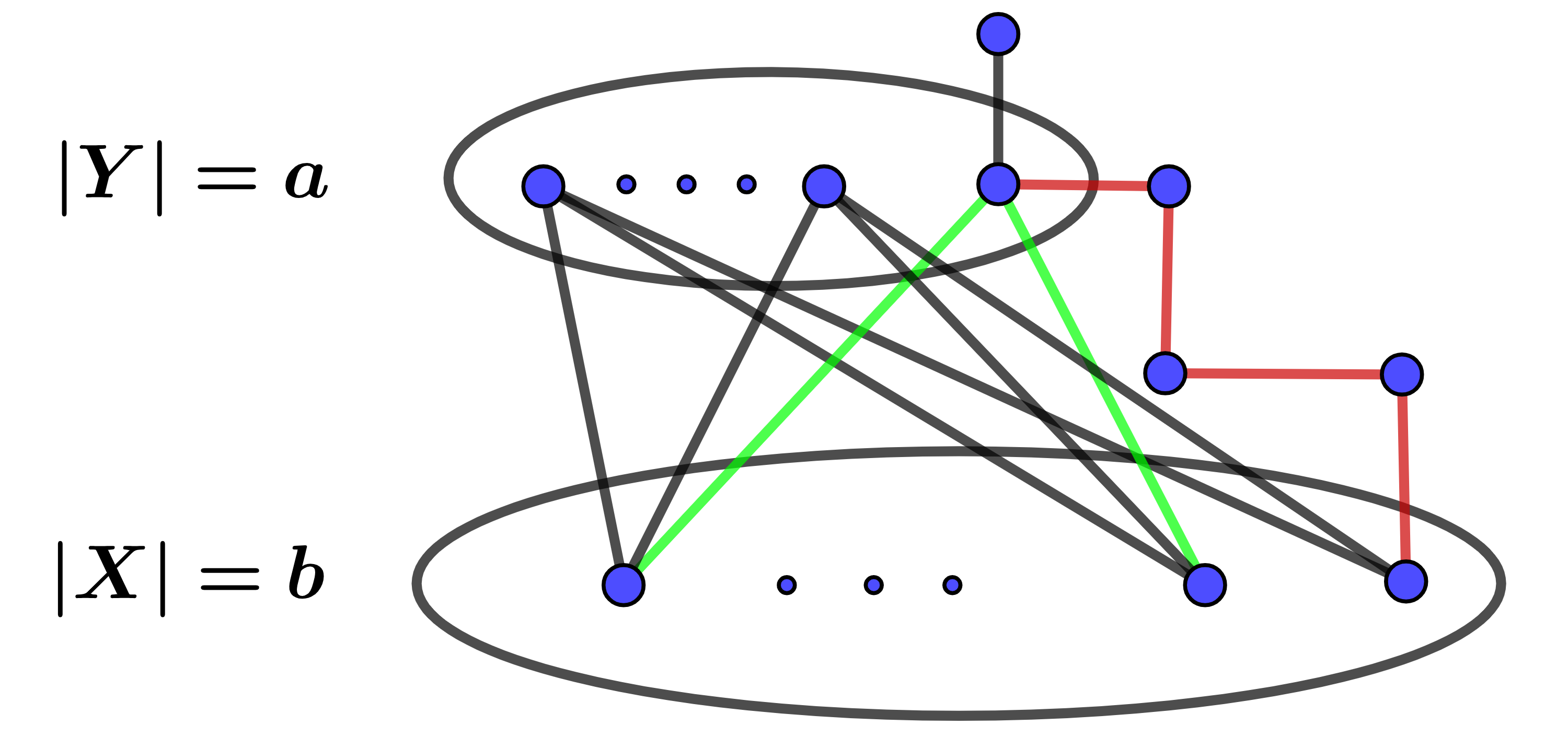}  
\caption{A possible graph in Lemma \ref{lem42}.} \label{fig-only} 
\end{figure} 

\begin{proof}
We know that  $G$ has $7$ possible cases.  
We prove the above case in Figure \ref{fig-only} 
only, since the other cases can be proved in  the same way. 
By computation, we obtain that 
$\lambda (G)$ is the largest root of $E(x)$, where $E(x)$ is defined as 
\begin{align*} 
E(x)&:=x^8 - (ab+4)x^6 + (6ab -2a-3b+5)x^4 -( 8ab-5a-7b+5)x^2 \\
&\quad - (2ab-2a-2b+2)x + ab - a - b + 1. 
\end{align*}
Note that $m=ab+4$ and 
\begin{align*}
E(x)-xL(x) &= (2a-3)(b-1)x^4 -(5a-7)(b-1)x^2 \\
&\quad -(ab-2a-2b+3)x + ab-a-b+1.
\end{align*}
We can verify that $x^2L(x) < E(x)$ 
for every $x\ge \sqrt{(a-1)b}$. 
Since $K_{a-1,b}$ is a subgraph of $G$, we know that 
$\lambda (G)\ge \lambda (K_{a-1,b})=\sqrt{(a-1)b}$. 
Then $\lambda^2(G) L(\lambda (G)) < E(\lambda (G))=0$, 
which yields $L(\lambda (G))<0$ and $\lambda (G)<\gamma (m)$. 
\end{proof}

The main result of this section is as follows. 

\begin{theorem} \label{thmmain}
Let $G$ be a graph with $m$ edges.  
If $G$ does not contain any member of $\{C_3,C_5\}$
 and $G$ is non-bipartite, 
then 
\[  \lambda (G) \le \gamma (m),  \]
equality holds if and only if $m$ is odd and 
$G=S_{3}(K_{2, \frac{m-3}{2} })$.  
\end{theorem}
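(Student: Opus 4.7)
The plan is to adapt the strategy used in the proof of Theorem \ref{thmZS2022} in Section \ref{sec3}, replacing the induced $C_5$ analysis by an induced $C_7$ analysis. Assume toward contradiction that $G$ is a $\{C_3, C_5\}$-free non-bipartite $m$-edge graph with $\lambda(G) \ge \gamma(m)$, chosen to maximize the spectral radius; a standard component-identification (or edge-relocation) argument forces $G$ to be connected, and small values of $m$ can be verified directly, so take $m$ sufficiently large.

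The first key step is to show that the shortest odd cycle in $G$ has length exactly $7$. Since $G$ is $\{C_3,C_5\}$-free and non-bipartite, this length is at least $7$. To rule out an induced $C_{2k+1}$ with $2k+1\ge 9$, I would apply Lemma \ref{lemCauchy} to such an induced odd cycle, extract lower/upper bounds on several positive and negative eigenvalues of $G$ from the explicit values $2\cos(2\pi j/(2k+1))$, and then invoke Lemma \ref{lem21} together with Lemma \ref{lem32} (now in the form $f(x)=(\sqrt{m-4}+x)x^2$, since (\ref{eq-gamma}) gives $\lambda_1\ge \gamma(m)>\sqrt{m-4}$) to force $t(G)>0$, contradicting the $C_3$-freeness of $G$. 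The shape of the argument parallels Claim \ref{claim3.1}, with all numerical constants shifted.

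Fixing an induced $C_7$ on a vertex set $S=\{u_1,\dots,u_7\}$, I would next establish analogs of Claim \ref{claim3.2}: $C_7$ with a pendant vertex and $C_7$ with a vertex attached to a pair $\{u_i,u_j\}$ of arc distance $1$ or $3$ in $C_7$ are forbidden as induced subgraphs of $G$. Indeed, a length-$2$ path through an external vertex closed by the shorter arc of $C_7$ produces a $C_3$ (arc distance $1$) or a $C_5$ (arc distance $3$), already contradicting the hypotheses; the pendant case and the handful of remaining small two- or three-vertex extensions are handled by the same routine of computing explicit eigenvalues and plugging them into Lemma \ref{lem21} and Lemma \ref{lem32} to force $t(G)>0$. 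As a consequence, every $v\in V(G)\setminus S$ satisfies $d_S(v)=2$ with $N_S(v)$ an arc-distance-$2$ pair, and a $P_3$-argument identical to Claim \ref{claim3.3} gives $V(G)=S\cup N(S)$.

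It then remains to pin down the global structure and carry out the spectral comparison. Using one further forbidden induced subgraph to control which arc-distance-$2$ pairs can be realized simultaneously by external vertices, I would split the analysis according to the set of realized pairs. In the single-pair case $G$ is precisely $S_3(K_{2,(m-3)/2})$ with $m$ odd; otherwise $G$ is either $S_3(K_{a,b})$ for some $a,b\ge 2$ with $ab+3=m$, or a graph obtained from some $S_3(K_{a,b})$ by adding one extra edge, the latter ruled out by Lemma \ref{lem42}. A direct computation of $\det(xI_n-A(S_3(K_{a,b})))$ isolates a degree-$7$ factor which can be compared with the polynomial $L(x)$ of (\ref{Lx}), in exactly the style of the final step of the proof of Theorem \ref{thmZS2022}, yielding $\lambda(S_3(K_{a,b}))\le \gamma(m)$ with equality if and only if $\min\{a,b\}=2$. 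The main obstacle will be the first two structural steps: on $C_7$ there are strictly more pair configurations than on $C_5$, and each candidate forbidden induced subgraph requires a fresh, case-by-case numerical verification via Lemma \ref{lem21} and Lemma \ref{lem32} that $t(G)>0$ holds for all sufficiently large $m$.
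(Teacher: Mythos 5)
Your plan tracks the paper's actual proof closely through Claim \ref{claim4.1} (the shortest-odd-cycle-is-$C_7$ step, using Lemmas \ref{lemCauchy}, \ref{lem21}, \ref{lem32} with $g(x)=(\sqrt{m-4}+x)x^2$) and in the final polynomial comparison against $L(x)$, so the overall strategy is correct. But there is a genuine gap in the forbidden-subgraph step. You assert that ``$C_7$ with a pendant vertex'' can be ruled out as an induced subgraph by the same eigenvalue-counting routine. This is exactly the point the paper flags as the obstacle: it defines $T_0$ to be $C_7$ with a pendant edge and states explicitly that $T_0$ \emph{cannot} be shown to be forbidden by these calculations. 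Heuristically, $T_0$ shares all seven eigenvalues of $C_7$ and adds only one extra eigenvalue near zero, so the lower bound $\sum f(\lambda_i)$ does not gain enough over the $C_7$ case to push $t(G)>0$. Consequently, your structural conclusion that every $v\in V(G)\setminus S$ has $d_S(v)=2$ is false, and the subsequent ``$P_3$-argument identical to Claim \ref{claim3.3}'' cannot proceed as you describe.

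The paper's workaround, which your proposal omits, is to prove only $d_S(v)\in\{1,2\}$ (Claim \ref{claim4.3}), then use the $9$-vertex forbidden subgraph $T_1$ (essentially $C_7$ with two pendant attachments, whose eigenvalue contribution \emph{is} sufficient) to deduce that at most one vertex has $d_S(v)=1$, and to use the $9$-vertex graphs $T_5,T_6$ to control distance-$2$ vertices. The leftover $|V_1|=1$ case is precisely where the extra-edge graphs of Lemma \ref{lem42} arise. Your proposal does invoke Lemma \ref{lem42} at the very end for ``a graph obtained from $S_3(K_{a,b})$ by adding one extra edge,'' but that is internally inconsistent with your earlier claim: if the pendant $T_0$ really were a forbidden induced subgraph, no such extra-edge graph could occur in the first place. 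To repair the argument you would need to drop the $T_0$ claim, replace the ``$d_S(v)=2$ always'' conclusion with $d_S(v)\in\{1,2\}$ plus $|V_1|\le 1$ via a two-extra-vertex forbidden subgraph, and only then route the residual pendant configuration through Lemma \ref{lem42}.
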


\begin{proof} 
Assume that $G$ has no $C_3$ and $C_5$, 
and $G$ is non-bipartite 
with $\lambda (G) \ge \gamma (m)$, we will show that $m$ is odd and 
$G=S_3(K_{2,\frac{m-3}{2}})$. 
Similar with that in the proof of Theorem \ref{thmZS2022} 
in Section \ref{sec3}, 
it is sufficient to consider the case that $G$ is connected. 
Since $G$ is non-bipartite, 
we can assume that $C_s$ is a shortest odd cycle of $G$. 
Note that a shortest odd cycle in $G$ must be an induced subgraph.  
Since $G$ is $C_3$-free and $C_5$-free, we have $s\ge 7$ and 
$\lambda (G) < \sqrt{m}$  by Theorem \ref{thmnosal}.  In what follows, we shall show that $s=7$. 
We denote 
\[  g(x):=(\sqrt{m-4} +x)x^2.\]   
Let $\lambda_1 ,\lambda_2,\ldots ,\lambda_n$ be the eigenvalues of $G$ in decreasing order. 
Since $G$ is non-bipartite and $G$ has an induced odd cycle 
 of length at least $7$.  
For $m\le 10$, we can do few case analysis 
whether $C_7\subseteq G$ or $C_9\subseteq G$. 
Thus it is easy to verify the required theorem for $m\le 10$. 
Next, we shall consider the case $m\ge 11$ 
in the  proof.   

\begin{claim} \label{claim4.1}
A shortest odd cycle in $G$ is $C_7$. 
\end{claim}

\begin{proof}[Proof of Claim \ref{claim4.1}]
Assume  that $C_9$ is an induced odd cycle in $G$, 
the Cauchy interlacing theorem implies 
$\lambda_{n-9+i}(G) \le \lambda_i(C_9) \le \lambda_i(G)$ 
for every $i\in \{1,2,\ldots ,9\}$. 
From the following Table 2, we can see that 
$\lambda_2,\lambda_3 \ge 1.532$. Then 
\[ g(\lambda_2) , g(\lambda_3) \ge g(1.532) \ge 2.347 \sqrt{m-4} + 3.596. \]
Moreover, we have $\lambda_4,\lambda_5 \ge 0.347$, 
which implies 
\[ g(\lambda_4), g(\lambda_5) \ge g(0.347)\ge 0.12 \sqrt{m-4} + 0.041.  \]
We next consider the negative eigenvalues of $G$. 
Note from (\ref{eq-gamma}) that $\lambda_1 \ge \gamma (m)>\sqrt{m-4}$. 
Since $\lambda_{n-3}\le \lambda_{n-2} \le \lambda_7(C_9)=-1$ 
and $\lambda_n \le \lambda_{n-1} \le \lambda_8(C_9)=-1.879$,  
we have $\lambda_n^2 \le 2m- (\sum_{i=1}^5 \lambda_i^2 + \lambda_{n-3}^2 + \lambda_{n-2}^2 + \lambda_{n-1}^2)<
2m-(m-4+10.465)= m-6.465$. 
Thus $-\sqrt{m-6.465} \le \lambda_n \le -1.879$ and then 
\[  g(\lambda_n) \ge 
\min\{ g(-\sqrt{m-6.465}), g(-1.879)\} > 0.8 \sqrt{m-4}, \]
where the last inequality holds for every $m\ge 11$. 
Similarly, we have $\lambda_{n-1}^2 + \lambda_n^2 \le 
2m - (\sum_{i=1}^5 \lambda_i^2 + \lambda_{n-3}^2+\lambda_{n-2}^2) 
< m-2.934$, which together with $\lambda_{n-1}^2 \le \lambda_n^2$ 
yields $-\sqrt{{(m-2.934)}/{2}} < \lambda_{n-1} \le -1.879$. Then 
for every $m\ge 11$, we have 
\[  g(\lambda_{n-1}) \ge \min\{g(-\sqrt{{(m-2.934)}/{2}}) , g(-1.879)\} 
> 0.9 \sqrt{m-4}. \]
Moreover, we can similarly get $-\sqrt{{(m-1.934)}/{3}} < \lambda_{n-2} \le -1$ and 
\[ g(\lambda_{n-2}) \ge \min\{g(-\sqrt{{(m-1.934)}/{3}}), g(-1)\} 
\ge \sqrt{m-4} -1. \]
The inequality $-\sqrt{{(m-0.934)}/{4}} < \lambda_{n-3} \le -1$  implies 
\[ g(\lambda_{n-3}) \ge \min\{g(-\sqrt{{(m-0.934)}/{4}}), g(-1)\} 
\ge \sqrt{m-4} -1. \]
Owing to $\sqrt{m}\ge \lambda (G)\ge \gamma (m)>\sqrt{m-4}$,  
by Lemma \ref{lem21}, we obtain  
\begin{equation*}
\begin{aligned} 
 t(G) &> \frac{1}{6}
 \left(\sum_{i=2}^5 g(\lambda_i) + g(\lambda_{n-5+i}) \right)  - \frac{4}{3} \lambda_1(G) \\
& > \frac{1}{6}\left( 8.634\sqrt{m-4} - 8\sqrt{m} + 5.274 \right) >0, 
\end{aligned}
\end{equation*}
which is a contradiction. Therefore, the odd cycle 
$C_9$ can not be an induced subgraph in $G$. Similarly, we can show 
by using the monotonicity of $\cos x$ 
 that $C_s$ is not an induced subgraph of $G$ for each 
$s\ge 11$. Consequently, we get $s=7$.  
\end{proof}

 From Claim \ref{claim4.1}, we denote by $S=\{u_1,u_2,\ldots ,u_7\}$ 
the set of vertices of a copy of  $C_7$ in $G$.  
Next, we shall show that the following graphs are forbidden induced subgraphs in $G$, 
and compute their eigenvalues; see Figure \ref{fig-5} and Table \ref{tab-2}.

 \begin{figure}[H]
\centering 
\includegraphics[scale=0.7]{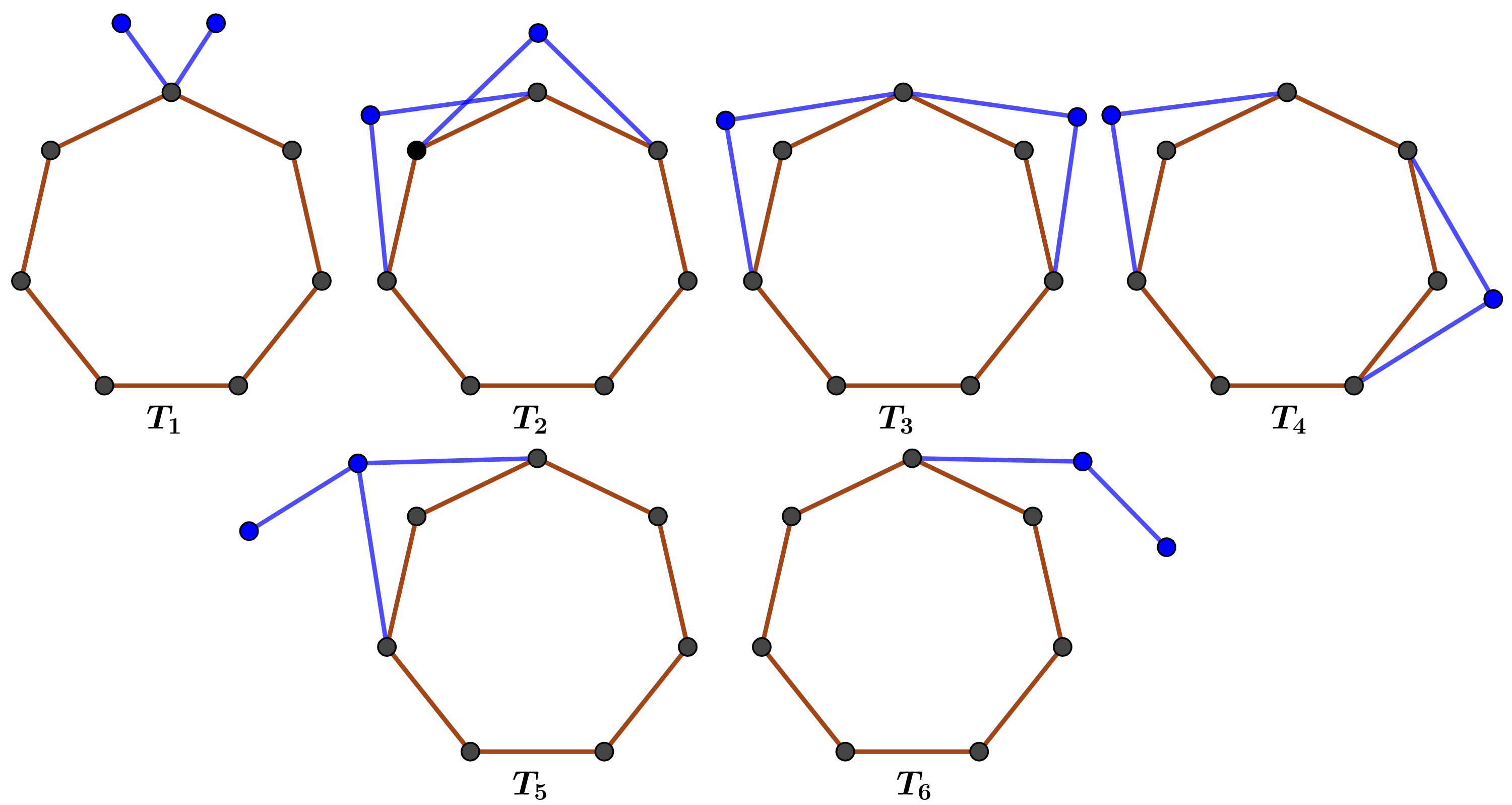}  
\caption{Some forbidden induced subgraphs in $G$.} \label{fig-5} 
\end{figure} 

\begin{table}[H]
\centering  
\begin{tabular}{cccccccccc}
\toprule 
    & $\lambda_1$  & $\lambda_2$  &  $\lambda_3$ 
    &  $\lambda_4$ &  $\lambda_5$ &  $\lambda_6$ & $\lambda_7$ 
    & $\lambda_8$ & $\lambda_9$ \\ 
\midrule 
 $C_9$ & 2 & 1.532 & 1.532 & 0.347 & 0.347 & $-1$ &
 $-1$ & $-1.879$ & $-1.879$ \\ 
$T_1$ & 2.223  &  1.568 & 1.247 & 0.288 & 0 & $-0.445$ 
& $-0.919$ & $-1.801$ & $-2.161$
 \\ 
$T_2$ & 2.573  &  1.453 & 1.441 & 0.566 & $-0.358$ & 
$-0.485$ & $-0.795$ & $-1.871$ & $-2.523$  \\ 
$T_3$ & 2.579 & 1.618 & 1.373 & 0 & 0 & $-0.451$ & $-0.618$ 
& $-2$ & $-2.501$   \\
$T_4$ & 2.503 & 1.813 & 1.264 & 0 &0 & $-0.470$ & $-0.576$ 
& $-2.191$ & $-2.342$ \\ 
$T_5$ & 2.414 & 1.508 & 1.247 & 0.679 & $-0.414$ & 
$-0.445$ & $-0.825$ & $-1.801$ & $-2.362$ \\ 
$T_6$ & 2.124 & 1.540 & 1.247 & 0.807 & $-0.337$ & 
$-0.445$ & $-1.101$ & $-1.801$ & $-2.032$ \\ 
\bottomrule 
\end{tabular} 
\caption{Eigenvalues of graphs $C_9$ and $T_i$ for $i\in \{1,2,\ldots ,6\}$.} \label{tab-2} 
 \end{table}

\begin{claim} \label{claim4.2}
Any graph of 
$\{T_i: 1\le i\le 6\}$ can not be an induced subgraph in $G$. 
\end{claim}

We denote by $T_0$ be the graph on $8$ vertices 
obtained from $C_7$ by hanging an edge. 
Unfortunately, we can not  prove  that 
$T_0$ is not an induced subgraph of $G$ by using similar calculations. 
This is slightly different from the proof of Theorem \ref{thmZS2022} in Section \ref{sec3}, and makes the forthcoming proof more complicated. 

\begin{proof}[Proof of Claim \ref{claim4.2}] 
Our proof needs some tedious calculations 
 similar with that in Claim \ref{claim4.1}.  
 Suppose on the contrary that $G$ contains $T_i$ 
 as an induced subgraph for some $i\in \{1,2,3,4,5,6\}$. 
 To obtain a contradiction, we shall show $t(G)>0$ by applying 
 Lemma \ref{lem21}. 
 If $T_1$ is an induced subgraph of $G$, then 
 Cauchy's interlacing theorem gives $\lambda_{n-9+i}(G)\le \lambda (T_1) 
 \le \lambda_i (G)$ for every $i\in \{1,2,\ldots ,9\}$. 
 In particular, we have $\lambda_2\ge 1.568$, 
 $\lambda_3\ge 1.247$ and $\lambda_4\ge 0.288$. Then 
 \begin{gather*}
 g(\lambda_2) \ge g(1.568) \ge 2.458\sqrt{m-4} + 3.855,\\
 g(\lambda_3) \ge g(1.247) \ge 1.555\sqrt{m-4} + 1.939,
 \end{gather*}
 and 
 \[ g(\lambda_4) \ge g(0.288) \ge 0.082 \sqrt{m-4} + 0.023.  \]
 In addition, the negative eigenvalues of $T_1$ imply that 
 $\lambda_{n-3}\le -0.445$, $\lambda_{n-2} \le -0.919$, 
 $\lambda_{n-1} \le -1.801$ and $\lambda_n \le -2.161$. 
We know from (\ref{eq-gamma}) that  $\lambda_1 \ge \gamma (m)> \sqrt{m-4}$, 
which yields 
 $\lambda_n^2\le 2m - (\sum_{i=1}^4 \lambda_i^2 + \lambda_{n-3}^2 + 
 \lambda_{n-2}^2 + \lambda_{n-1}^2) < 2m- (m-4+8.382) =m-4.382$. 
 Then $-\sqrt{m-4.382} < \lambda_n \le -2.161$ and 
 \[  g(\lambda_n) \ge \min \{g(-\sqrt{m-4.382}), g(-2.161)\}>0.15\sqrt{m-4}. \]
 Since $\lambda_{n-1}^2 + \lambda_n^2 \le 
 2m- (\sum_{i=1}^4 \lambda_i^2 + \lambda_{n-3}^2 + 
 \lambda_{n-2}^2) < 2m-(m-4+5.139)=m-1.139$ and 
 $\lambda_{n-1}^2 \le \lambda_n^2$, we get 
 $-\sqrt{(m-1.139)/2}< \lambda_{n-1} \le -1.801$ and 
\[  g(\lambda_{n-1}) \ge \min \{g(-\sqrt{(m-1.139)/2}), g(-1.801)\}\ge 
3.243\sqrt{m-4} - 5.841. \]
Similarly,  we have $-\sqrt{(m-0.294)/3} < \lambda_{n-2} \le -0.919$ and 
 \[    g(\lambda_{n-2}) \ge \min \{g(-\sqrt{(m-0.294)/3}), g(-0.919)\}\ge 
0.844\sqrt{m-4} - 0.776. \]  
 By Lemma \ref{lem21}, we have 
\begin{align*}  t(G) &> 
\frac{1}{6} (g(\lambda_2) + g(\lambda_3) + g(\lambda_4) + g(\lambda_n) + 
g(\lambda_{n-1}) + g(\lambda_{n-2})) - \frac{4}{3}\lambda_1(G) \\
&>\frac{1}{6}( 8.332\sqrt{m-4} -8\sqrt{m} - 0.8)>0, 
\end{align*}
which is a contradiction.

If $T_2$ is an induced subgraph of $G$, then 
Cauchy's interlacing theorem implies $\lambda_{n-9+i}(G)\le \lambda_i(T_2) \le \lambda_i(G)$ 
for every $i\in \{1,2,\ldots ,9\}$. 
Since $\lambda_2 \ge 1.453$, $\lambda_3\ge 1.441$ and $\lambda_4 \ge 
0.566$, we obtain 
\begin{gather*} 
g(\lambda_2) \ge g(1.453)\ge 2.111\sqrt{m-4} + 3.067,  \\
g(\lambda_3) \ge g(1.441)\ge 2.076 \sqrt{m-4} + 2.992,
\end{gather*}
and 
\[  g(\lambda_4) \ge g(0.566) \ge 0.320\sqrt{m-4} + 0.181. \]
On the other hand, the negative eigenvalues of $T_2$ can imply that 
$\lambda_{n-4}\le -0.358$, $\lambda_{n-3} \le -0.485$, 
$\lambda_{n-2}\le -0.795$, $\lambda_{n-1} \le -1.871$ 
and $\lambda_n \le -2.523$. 
Due to $\lambda_1> \sqrt{m-4}$, then we have  
$\lambda_n^2 \le 2m- (\sum_{i=1}^4 \lambda_i^2 + \lambda_{n-i}^2) < 2m- (m-4 +9.004) \le m-5.004$, which yields 
$-\sqrt{m-5.004} < \lambda_n \le -2.523 $. Consequently, we get 
\[  g(\lambda_n) \ge \min\{g(-\sqrt{m-5.004}), g(-2.523)\} > 
0.4\sqrt{m-4}.  \]
Moreover, since $\lambda_{n-1}^2 + \lambda_n^2 \le 
2m- (\sum_{i=1}^4 \lambda_i^2 + \lambda_{n-4}^2+\lambda_{n-3}^2+\lambda_{n-2}^2) < 2m-(m-4+5.503)=m-1.503$ and 
$\lambda_{n-1}^2 \le \lambda_n^2$, we get 
$-\sqrt{(m-1.503)/2} < \lambda_{n-1} \le -1.871$ and  
\[  g(\lambda_{n-1}) \ge 
\min\{ g(-\sqrt{(m-1.503)/2}), g(-1.871)\} \ge 3.5\sqrt{m-4}-6.549. \]
Similarly, we have $-\sqrt{(m-0.871)/3} < \lambda_{n-2} \le -0.795$ and 
\[  g(\lambda_{n-2}) \ge \min\{g(-\sqrt{(m-0.871)/3}), g(-0.795)\} 
\ge 0.632 \sqrt{m-4} - 0.502. \]
By Lemma \ref{lem21}, we have 
\begin{align*}  t(G) &> 
\frac{1}{6} (g(\lambda_2) + g(\lambda_3) + g(\lambda_4) + g(\lambda_n) + 
g(\lambda_{n-1}) + g(\lambda_{n-2})) - \frac{4}{3}\lambda_1(G) \\
&>\frac{1}{6}(9.039 \sqrt{m-4} -8\sqrt{m} - 0.811)>0, 
\end{align*}
which is a contradiction.

If $T_3$ is an induced subgraph of $G$, then 
$\lambda_2\ge 1.618$ and $\lambda_3\ge 1.373$. We get  
\[  g(\lambda_2)\ge g(1.618)\ge 2.617 \sqrt{m-4} + 4.235, \]
and 
\[ g(\lambda_3) \ge g(1.373) \ge 1.885\sqrt{m-4}+2.588.  \]
The negative eigenvalues of $T_3$ can give that 
$\lambda_{n-3}\le -0.451$, $\lambda_{n-2}\le -0.618$, 
$\lambda_{n-1}\le -2$ and $\lambda_{n}\le -2.501$. 
Since $\lambda_n^2 \le 2m- (\lambda_1^2+\lambda_2^2+\lambda_3^2 + 
\lambda_{n-3}^2+\lambda_{n-2}^2+\lambda_{n-1}^2) < 
2m- (m-4 + 9.088)=m-5.088$, 
then $-\sqrt{m-5.088} < \lambda_n \le -2.501$ and 
\[  g(\lambda_n)\ge \min\{ g(-\sqrt{m-5.088}), g(-2.501)\} 
> 0.5\sqrt{m-4}. \]
Since $\lambda_{n-1}^2+\lambda_n^2\le 
2m-(\lambda_1^2+\lambda_2^2+\lambda_3^2 + 
\lambda_{n-3}^2+\lambda_{n-2}^2)<2m-(m-4 + 5.088)=m-1.088$ 
and $\lambda_{n-1}^2\le \lambda_n^2$, we get 
$-\sqrt{(m-1.088)/2} < \lambda_{n-1} \le -2$ and 
\[ g(\lambda_{n-1}) \ge \min\{g(-\sqrt{(m-1.088)/2}), g(-2)\} 
\ge 4\sqrt{m-4} -8. \]
By Lemma \ref{lem21}, we have 
\begin{align*}  t(G) &> 
\frac{1}{6} (g(\lambda_2) + g(\lambda_3)  + g(\lambda_n) + 
g(\lambda_{n-1}) ) - \frac{4}{3}\lambda_1(G) \\
&>\frac{1}{6}( 9.002\sqrt{m-4} -8\sqrt{m} - 1.177)>0, 
\end{align*}
which is a contradiction.

If $T_4$ is an induced subgraph of $G$, 
then we get from Cauchy's interlacing theorem that 
$\lambda_2\ge 1.813$ and $\lambda_3\ge 1.264$. Then 
\[  g(\lambda_2) \ge g(1.813)\ge 3.286\sqrt{m-4} + 5.959, \]
and 
\[ g(\lambda_3) \ge g(1.264) \ge 1.597\sqrt{m-4} + 2.019. \]
Moreover, we have $\lambda_{n-3}\le -0.470$, 
$\lambda_{n-2}\le -0.576$, $\lambda_{n-1}\le -2.191$ 
and $\lambda_n \le -2.342$. 
Since $\lambda_{n}^2\le 2m- (\sum_{i=1}^3 \lambda_i+ \lambda_{n-i}) 
< 2m- (m-4 +10.237)=m-6.237$, we get $-\sqrt{m-6.237} 
< \lambda_n \le -2.342$ and 
\[  g(\lambda_n) \ge \min\{ g(-\sqrt{m-6.237}), g(-2.342)\} \ge \sqrt{m-4}. \]
Since $\lambda_{n-1}^2+\lambda_n^2 \le 
2m- (\sum_{i=1}^3 \lambda_i+ \lambda_{n-3} + \lambda_{n-2}) 
< 2m- (m-4+5.437)=m-1.437$ and $\lambda_{n-1}^2\le \lambda_n^2$, 
we get $-\sqrt{(m-1.437)/2} < \lambda_{n-1} \le -2.191$ and 
\[ g(\lambda_{n-1}) \ge \min\{g(-\sqrt{(m-1.437)/2}), g(-2.191)\} 
\ge 4.8\sqrt{m-4} -10.517.  \]
By Lemma \ref{lem21}, we obtain 
\begin{equation*}
\begin{aligned}  t(G) &> 
\frac{1}{6} (g(\lambda_2) + g(\lambda_3)  + g(\lambda_n) + 
g(\lambda_{n-1}) ) - \frac{4}{3}\lambda_1(G) \\
&>\frac{1}{6}( 10.683\sqrt{m-4} -8\sqrt{m} - 2.539)>0, 
\end{aligned}
\end{equation*}
which is a contradiction.

If $T_5$ is an induced subgraph of $G$, then 
Cauchy's interlacing theorem implies 
$\lambda_2\ge 1.508$, $\lambda_3\ge 1.247$ and $\lambda_4\ge 0.679$. 
Then 
\begin{gather*}
g(\lambda_2) \ge g(1.508) \ge 2.274\sqrt{m-4} + 3.429,\\
g(\lambda_3)\ge g(1.247) \ge 1.555\sqrt{m-4} + 1.939, 
\end{gather*}
and 
\[ g(\lambda_4)\ge g(0.679) \ge 0.461\sqrt{m-4} + 0.313. \]
The negative eigenvalues of $T_5$ imply that 
$\lambda_{n-4}\le -0.414$, $\lambda_{n-3}\le -0.445$, 
$\lambda_{n-2}\le -0.825$, $\lambda_{n-1}\le -1.801$ 
and $\lambda_n \le -2.362$. 
Since $\lambda_n^2\le 2m-(\sum_{i=1}^4 \lambda_i + \lambda_{n-i}) 
< 2m- (m-4+8.583)=m-4.583$, we get 
$-\sqrt{m-4.583} < \lambda_n \le -2.362$ and 
\[  g(\lambda_n) \ge \min\{g(-\sqrt{m-4.583}), g(-2.362)\} 
\ge 0.25\sqrt{m-4}. \]
Since $\lambda_{n-1}^2+\lambda_n^2\le 
2m- (\sum_{i=1}^4 \lambda_i^2 + \lambda_{n-4}^2 + \lambda_{n-3}^2 + 
\lambda_{n-2}^2)< 2m- (m-4+5.34)=m-1.34$ and 
$\lambda_{n-1}^2\le \lambda_n^2$, we have 
$-\sqrt{(m-1.34)/2} < \lambda_{n-1} \le -1.801$ and 
\[ g(\lambda_{n-1}) \ge \min\{g(-\sqrt{(m-1.34)/2}), g(-1.801)\} 
\ge 3.243\sqrt{m-4} -5.841.  \]
Similarly, we can get $-\sqrt{(m-0.659)/3} < \lambda_{n-2} \le -0.825$ 
and 
\[  g(\lambda_{n-2}) \ge \min\{ g(-\sqrt{(m-0.659)/3}), 
g(-0.825)\} \ge 0.68\sqrt{m-4} - 0.561.  \]
By Lemma \ref{lem21}, we obtain 
\begin{equation*}
\begin{aligned}  t(G) &> 
\frac{1}{6} (g(\lambda_2) + g(\lambda_3)  + g(\lambda_4) + g(\lambda_n) + 
g(\lambda_{n-1}) + g(\lambda_{n-2})) - \frac{4}{3}\lambda_1(G) \\
&>\frac{1}{6}( 8.463\sqrt{m-4} -8\sqrt{m} - 0.721)>0, 
\end{aligned}
\end{equation*}
which is a contradiction.

If $T_6$ is an induced subgraph of $G$, then Cauchy's interlacing theorem implies that 
$\lambda_2\ge 1.540$, $\lambda_3\ge 1.247$ and $\lambda_4\ge 0.807$. 
Then 
\begin{gather*}
g(\lambda_2)\ge g(1.540)\ge 2.371\sqrt{m-4} + 3.652,\\
g(\lambda_3) \ge g(1.247) \ge 1.555\sqrt{m-4} +1.939,
\end{gather*}
and 
\[  g(\lambda_4)\ge g(0.807) \ge 0.651\sqrt{m-4}+0.525. \]
The negative eigenvalues of $T_6$ yield that 
$\lambda_{n-4}\le -0.337$, $\lambda_{n-3}\le -0.445$, 
$\lambda_{n-2}\le -1.101$, $\lambda_{n-1} \le -1.801$ and 
$\lambda_n \le -2.032$. 
Since $\lambda_n^2 \le 2m- (\sum_{i=1}^4 \lambda_i^2 + \lambda_{n-i}^2) < 2m- (m-4 + 9.345)=m-5.345$, we get $-\sqrt{m-5.345} < \lambda_n \le -2.032$ and 
\[  g(\lambda_n) \ge \min\{g(-\sqrt{m-5.345}), g(-2.032)\} 
\ge 0.65\sqrt{m-4} . \]
Since $\lambda_{n-1}^2+\lambda_n^2 \le 
2m- (\sum_{i=1}^4 \lambda_i^2+\lambda_{n-4}^2+\lambda_{n-3}^2+\lambda_{n-2}^2) < 2m- (m-4+6.101) =m-2.101$ 
and $\lambda_{n-1}^2\le \lambda_n^2$, we get 
$-\sqrt{(m-2.101)/2} <\lambda_{n-1} \le -1.801$ and 
\[  g(\lambda_{n-1}) \ge \min\{ g(-\sqrt{(m-2.101)/2} ), g(-1.801)\} 
\ge 3.243\sqrt{m-4} -5.841.  \]
Similarly, we can get 
$-\sqrt{(m-0.889)/3} < \lambda_{n-2} \le -1.101$ and 
\[  g(\lambda_{n-2}) \ge \min\{g(-\sqrt{(m-0.889)/3}), g(-1.101)\} 
\ge 1.212\sqrt{m-4} - 1.334. \]
By Lemma \ref{lem21}, we obtain 
\begin{equation*}
\begin{aligned}  t(G) &> 
\frac{1}{6} (g(\lambda_2) + g(\lambda_3)  + g(\lambda_4) + g(\lambda_n) + 
g(\lambda_{n-1}) + g(\lambda_{n-2})) - \frac{4}{3}\lambda_1(G) \\
&>\frac{1}{6}( 9.682\sqrt{m-4} -8\sqrt{m} - 1.059)>0, 
\end{aligned}
\end{equation*}
which is a contradiction. 
\end{proof}

\begin{claim} \label{claim4.3}
$V(G)=S\cup N(S)$ and $d_S(v)\in \{1,2\}$ for each vertex $v\in N(S)$. 
\end{claim}

\begin{proof}[Proof of Claim \ref{claim4.3}]
For each  $v\in N(S)$, 
 without loss of generality, 
 we may assume that $v\in N(u_1)$. 
 If $d_S(v)\ge 3$, then  we can find either a $C_3$ or $C_5$ in $G$, a contradiction. 
 This implies that $d_S(v)\in \{1,2\}$ for every $v\in N(S)$.  
 Next we prove that $V(G)=S\cup N(S)$. 
 Otherwise, if there is a vertex $v' \in V(G) \setminus (S\cup N(S))$, then $v'$ has distance at least $2$ from $S$. 
Let $v'vu_1$ be a path of $G$ such that 
 $v'u_i \notin E(G)$ for every $i\in [7]$. 
Since $d_S(v)\in \{1,2\}$ and $G$ is both $C_3$-free and $C_5$-free,  
we know by symmetry that 
 either $N_S(v)=\{u_1\}$ or $N_S(v)=\{u_1,u_3\}$. 
 If $N_S(v)=\{u_1\}$, 
 then 
 $\{v',v\}\cup S$ induces a copy of $T_6$, 
 a contradiction. 
 If $N_S(v)=\{u_1,u_3\}$, then 
 $\{v',v\}\cup S$ induces a copy of $T_5$, which is a contradiction. 
 Thus, we conclude that $V(G)=S \cup N(S)$ and 
 $d_S(v)\in \{1,2\}$ for every $v\in N(S)$. 
 \end{proof}
 
 From Claim \ref{claim4.3}, we assume that $V(G)\setminus S=V_1\cup V_2$, 
 where $V_i=\{v\in N(S): d_S(v)=i\}$ for every $i=1,2$. 
 Since $T_1$ is not an induced subgraph of $G$, 
 we get $0\le |V_1|\le 1$. 
Since $m\ge 11$, we get $V_2\neq \varnothing$. 
We can fix a vertex $v\in V_2$ and 
assume that $N_S(v)=\{u_1,u_3\}$. 
For each $w\in V_2$, 
since $G$ contains no triangles and  no  
$T_3$  as  induced subgraphs, we know that 
$N_S(w)\neq \{u_3,u_5\}$ and $N_S(w)\neq \{u_6,u_1\}$. 
Similarly, since $G$ contains no pentagon and $T_4$ as induced subgraphs, 
we get $N_S(w)\neq \{u_4,u_6\}$ and $N_S(w)\neq \{u_5,u_7\}$. 
Therefore, it is possible that $N_S(w)=\{u_1,u_3\},\{u_2,u_4\}$ or 
$\{u_7,u_2\}$. Furthermore, 
if $N_S(w)=\{u_1,u_3\}$, then $wv \notin E(G)$, since $G$  contains no  triangles; 
if $N_S(w)=\{u_2,u_4\}$, then 
$wv \in E(G)$, since $G$ contains no induced copy of 
$T_2$. 
We denote $N_{i,j}=\{w\in V(G)\setminus S : N_S(w)=\{u_i,u_j\}\}$. Note that $G$ has no induced copy of $T_3$, 
 there are at least one empty set 
in $\{N_{2,4},N_{7,2}\}$. 

{\bf Case 1.}
 If $N_{2,4}=\varnothing$ and $N_{7,2}=\varnothing$, 
then $V_2 = N_{1,3}$ and $V(G)\setminus S = N_{1,3}\cup V_1$. 
If $|V_1|=0$, then  $m$ is odd and $G=S_3(K_{2,\frac{m-3}{2}})$, 
as desired. 
If $|V_1|=1$, then  $m$ is even and 
 $G$ is a graph obtained from $S_3(K_{2,\frac{m-4}{2}})$ 
by hanging an edge to one vertex. 
By setting $a=2$ and $b=\frac{m-4}{2}$ in 
 Lemma \ref{lem42}, we know that $\lambda (G) < \gamma (m)$.

{\bf Case 2.} Without loss of generality, 
we may assume that $N_{2,4} \neq \varnothing$, then 
$V(G)\setminus S= N_{1,3} \cup N_{2,4}\cup V_1$. 
Moreover, $N_{1,3}$ and $N_{2,4}$ induce a complete 
bipartite subgraph in $G$. 
We denote $A=N_{1,3} \cup \{u_2,u_4\}$ 
and $B=N_{2,4}\cup \{u_3,u_1\}$. 
Clearly, we have $|A|=a \ge 2$ and $|B|=b\ge 2$.  
If $|V_1|=0$, then 
$G$ is isomorphic to the subdivision of  $K_{a,b}$ by 
replacing the edge $u_1u_4$ of $K_{a,b}$ with a path of length $4$, 
and $m= ab +3$. Note that $\lambda (S_3(K_{a,b}))$ is the largest root of 
\begin{align*}
Q(x) &:= x^7-(ab+3)x^5 + (5ab -2a -2b +2)x^3 \\ 
& \quad + (-5ab +4a +4b -3)x  -2ab +2a +2b -2. 
\end{align*} 
Recall in (\ref{Lx}) that $\gamma (m)$ denotes the largest root of 
$ L(x)$.  
We can easily verify that $L(x)\le Q(x)$ for every $x\ge 1$, so we get $L(\lambda (G)) \le Q(\lambda (G))=0$, 
which implies $\lambda (G) \le \gamma(m)$, equality holds if and only if 
$a=2$ or $b=2$, and thus $m$ is odd and $G=S_3(K_{2,\frac{m-3}{2}})$.
If $|V_1|=1$, then $m=ab+4$ and 
$G$ is obtained from $S_3(K_{a,b})$ by hanging an edge to 
one vertex. 
By Lemma \ref{lem42} again, 
we get $\lambda (G)< \gamma (m)$. 
This completes the proof. 
\end{proof}

\section{Concluding remarks} 

\label{sec5}

 We remark that the method stated in Sections \ref{sec3} and \ref{sec4} 
 can further allow us to determine 
the largest spectral radius of non-bipartite graphs with no copy of $C_3,C_5$ and $C_7$, and so far as to $C_9$ by more careful computations. 
From this evidence, 
we propose the following  conjecture for interested readers. 
Let $S_{2k-1}(K_{s,t})$ denote the 
graph obtained from the complete bipartite graph 
$K_{s,t}$ by replacing an edge with a  path $P_{2k+1}$ 
on $2k+1$ vertices, 
that is, introducing $2k-1$ new vertices on an edge. 
Clearly,  the odd girth of $S_{2k-1}(K_{s,t})$ is $2k+3$. 

\begin{conjecture} 
Let $G$ be a graph with $m$ edges.  
If $G$ does not contain any member of $\{C_3,C_5, \ldots ,C_{2k+1}\}$
 and $G$ is non-bipartite, 
then 
\[  \lambda (G) \le \lambda (S_{2k-1}(K_{2,\frac{m-2k+1}{2}}) ),  \]
equality holds if and only if $m$ is odd and $G=S_{2k-1}(K_{2, \frac{m-2k+1}{2} })$.  
\end{conjecture}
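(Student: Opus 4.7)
Write $\beta_k(m) := \lambda(S_{2k-1}(K_{2,(m-2k+1)/2}))$. Mimicking Sections~\ref{sec3} and~\ref{sec4}, my first task is to establish the uniform estimate $\sqrt{m-2k} < \beta_k(m) \le \sqrt{m-2k+1}$, the analogue of (\ref{eqbeta}) and (\ref{eq-gamma}), by locating the largest root of the explicit degree-$(2k+3)$ polynomial $L_k(x)$ coming from the characteristic polynomial of $S_{2k-1}(K_{2,(m-2k+1)/2})$. Assuming by contradiction that $G$ is a connected non-bipartite graph of size $m$ with no $C_3, C_5, \ldots, C_{2k+1}$ and with $\lambda(G) \ge \beta_k(m)$, Nosal's Theorem~\ref{thmnosal} gives $\sqrt{m-2k} < \lambda(G) < \sqrt{m}$. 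The goal is to force $G = S_{2k-1}(K_{2,(m-2k+1)/2})$; in particular $m$ must be odd. The central tools are Lemma~\ref{lemCauchy} (Cauchy interlacing), Lemma~\ref{lem21} (spectral triangle count), and the function $h_k(x) := (\sqrt{m-2k}+x)x^2$, for which the verbatim analogue of Lemma~\ref{lem32} handles the contribution of negative eigenvalues.

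\textbf{Step 1: The odd girth equals $2k+3$.} Let $s$ be the odd girth of $G$; then $s \ge 2k+3$ and a shortest odd cycle is an induced $C_s \subseteq G$ with eigenvalues $2\cos(2\pi j/s)$ for $0 \le j \le s-1$. Assume $s \ge 2k+5$. Interlacing gives lower bounds on the positive eigenvalues $\lambda_2(G), \ldots, \lambda_{\lfloor s/2 \rfloor}(G)$ and upper bounds on the smallest $\lfloor s/2 \rfloor$ eigenvalues of $G$; the identity $\sum_i \lambda_i^2 = 2m$ together with $\lambda_1(G) > \sqrt{m-2k}$ then yields matching lower bounds on these small eigenvalues. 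Feeding everything into Lemma~\ref{lem21} with $h_k$, the positive and negative contributions together dominate the deficit $\tfrac{2(s-1)}{3}\lambda_1 < \tfrac{2(s-1)}{3}\sqrt{m}$ provided $s \ge 2k+5$ and $m$ is large in terms of $k$, giving $t(G) > 0$, a contradiction.

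\textbf{Step 2: Local structure.} Let $S = \{u_1, \ldots, u_{2k+3}\}$ be the vertex set of an induced $C_{2k+3}$. I would list the finitely many ``small'' attachments generalizing $H_1, H_2, H_3$ from Section~\ref{sec3} and $T_1, \ldots, T_6$ from Section~\ref{sec4}: extend $C_{2k+3}$ by one or two extra vertices attached in various non-trivial ways. For each candidate $F$ the same recipe — interlace, bound trailing eigenvalues through $\sum \lambda_i^2 = 2m$, apply Lemmas~\ref{lem32} and~\ref{lem21} — forces $t(G) > 0$. The expected outcome: (i) $V(G) = S \cup N(S)$; (ii) $d_S(v) \in \{1,2\}$ for every $v \in N(S)$; (iii) $|V_1| \le 1$ where $V_1 = \{v : d_S(v)=1\}$; (iv) if $d_S(v) = 2$ the two neighbors lie at cyclic distance exactly $2$ on $C_{2k+3}$; (v) all vertices of $V_2 := \{v : d_S(v)=2\}$ share a single attachment pair $\{u_i, u_{i+2}\}$, since two different such pairs reproduce either a shorter odd cycle or one of the forbidden attachments from Step~2. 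Combining these, $G = S_{2k-1}(K_{a,b})$ (possibly with one pendant edge), with $ab = m-2k+1$ in the pendant-free case.

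\textbf{Step 3: Final comparison and main obstacle.} To conclude, I would verify a polynomial identity $L_k(x) - Q_{a,b,k}(x) = (x-1)\,p_{a,b,k}(x)$ with $p_{a,b,k}(x) \ge 0$ on $[1,\infty)$, extending the manipulation at the end of Section~\ref{sec3}, so that among $S_{2k-1}(K_{a,b})$ with $ab = m-2k+1$ the spectral radius is maximized when $\{a,b\} \ni 2$; the pendant case is killed by the direct analogue of Lemma~\ref{lem42}. The principal obstacle is Step~2: as $k$ grows, the list of candidate attachments to forbid expands, and because $\beta_k(m) - \sqrt{m-2k} \to 0$ as $m \to \infty$, the spare budget $(\lambda_1^2 - m)\lambda_1$ in Lemma~\ref{lem21} shrinks, so the numerical margins for each attachment become tighter. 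A conceptual argument treating all attachments uniformly — perhaps a single inequality on $h_k$ combined with a structural use of interlacing that avoids ad hoc case analysis — appears necessary to bypass the case explosion, and devising one is what I expect to be the hardest part.
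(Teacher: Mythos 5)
This statement is posed by the paper as an open \emph{conjecture}, not a theorem; there is no proof in the paper to compare against. The concluding remarks in Section~\ref{sec5} say only that the method of Sections~\ref{sec3} and~\ref{sec4} extends to $\{C_3,C_5,C_7\}$ and, ``by more careful computations,'' to $C_9$, and then pose the general case as a conjecture precisely because of the difficulty you identify.

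Your roadmap is the natural extension of the paper's method and matches what the authors themselves suggest, but it is not a proof, and the gap is real. The obstruction you flag at Step~2 — an unbounded list of forbidden attachments with shrinking numerical margins, requiring a uniform, non-casewise argument that you do not supply — is the decisive one. There is also a problem in Step~1: under the hypothesis $\lambda_1 > \sqrt{m-2k}$, the correction term in Lemma~\ref{lem21} satisfies $\tfrac{1}{3}(\lambda_1^2-m)\lambda_1 \ge -\tfrac{2k}{3}\lambda_1$, not $-\tfrac{2(s-1)}{3}\lambda_1$, so the deficit to overcome is $\tfrac{2k}{3}\sqrt{m}$, linear in $k$; whether the positive-eigenvalue contributions of an induced $C_s$ with $s\ge 2k+5$, together with the cascading $\sum_i\lambda_i^2=2m$ bookkeeping on the trailing eigenvalues, suffice to beat that deficit uniformly in $k$ is exactly the kind of claim the paper verifies numerically case by case (Claims~\ref{claim3.1} and~\ref{claim4.1}) and that you merely assert. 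Finally, even granting the asymptotic estimate, the conjectured bound is claimed for all odd $m\ge 2k+3$, so a bounded-$m$ case analysis whose extent grows with $k$ is also required. In short, you have correctly reconstructed the intended strategy and located its bottleneck, but the bottleneck is unresolved — which is precisely why the paper leaves this as a conjecture.
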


Let $B_{k}$ be the book graph, i.e., 
the graph obtained from $k$ triangles by sharing a common edge. 
In particular, we have $B_1=K_3$ and $B_2=K_4^{-}$, the $4$-vertex complete graph minus an edge. 
In 2021, 
Zhai, Lin and Shu \cite[Conjecture 5.2]{ZLS2021} made the following conjecture: 
Let $m$ be large enough and $G$ be a $B_k$-free graph with $m$ edges. 
Then  
\[   \lambda (G)\le \sqrt{m},  \]  
equality holds if and only if $G$ is a complete bipartite graph.

Soon after, Nikiforov \cite{Niki2021} confirmed Zhai--Lin--Shu's conjecture 
 by showing the following stronger theorem. 
Let $bk(G)$ denote the booksize of $G$, that is, the maximum 
number of triangles with a common edge in $G$. 
Nikiforov \cite{Niki2021} proved that 
if $G$ is a graph with $m$ edges and $\lambda (G)\ge \sqrt{m}$, 
then 
\[  bk(G)> \frac{1}{12}\sqrt[4]{m}, \] 
unless  $G$ is a complete bipartite graph (with possibly some isolated vertices). 
Since $B_2$ contains both $C_3$ and $C_4$ as a subgraph, 
 the result of Nikiforov generalized the Nosal Theorem \ref{thmnosal}.

 We conclude this paper with the following problem and 
  conjecture that the lower bound  $bk(G)\ge c\sqrt{m}$ 
 is true for some constant $c>0$.

\begin{conjecture} 
If $G$ is a graph with $m$ edges and $\lambda (G)\ge \sqrt{m}$, 
then 
\[  bk(G)\ge c \sqrt{m} \] 
for some constant $c>0$, unless  $G$ is a complete bipartite graph. 
\end{conjecture}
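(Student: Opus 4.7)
The plan is to adapt the Perron--Frobenius eigenvector technique behind Nosal's Theorem to a booksize bound. Let $\bm{x}$ be the nonnegative Perron eigenvector of $G$, normalized so that $x_z=\max_{v\in V(G)} x_v=1$ for some vertex $z$. Expanding $\lambda^2 x_z=(A^2\bm{x})_z$ along walks of length two from $z$ gives
\[ \lambda^2 \;=\; d(z)+\sum_{v\in N(z)} b(\{z,v\})\,x_v+\sum_{w\notin N[z]} |N(w)\cap N(z)|\,x_w, \]
where $N[z]=N(z)\cup\{z\}$ and $b(\{z,v\})=|N(z)\cap N(v)|$ is the booksize of the edge $\{z,v\}$. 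Bounding $x_v,x_w\le 1$ and using $|E(N(z),V\setminus N[z])|\le m-d(z)-t_z$, with $t_z$ the number of triangles through $z$, yields $t_z\ge \lambda^2-m$. Since $\sum_{v\in N(z)}b(\{z,v\})=2t_z$ and $d(z)\ge\lambda\ge\sqrt{m}$ (from $\lambda x_z=\sum_{v\in N(z)} x_v\le d(z)x_z$), a pigeonhole over $N(z)$ gives
\[ bk(G) \;\ge\; \max_{v\in N(z)} b(\{z,v\}) \;\ge\; \frac{2(\lambda^2-m)}{d(z)}. \]

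To push this to $bk(G)\ge c\sqrt{m}$, I would split on how close $\lambda$ is to $\sqrt{m}$. In the \emph{far} regime $\lambda^2\ge m+\eta\sqrt{m}\,d(z)$ for an absolute constant $\eta>0$, the display above already delivers $bk(G)\ge 2\eta\sqrt{m}$. The remaining \emph{near-tight} regime $\lambda^2-m=o(\sqrt{m}\,d(z))$ is where stability enters. Here I would prove, as an intermediate tool, a quantitative stability theorem for Nosal's bound: any non--complete-bipartite $m$-edge graph with $\lambda\ge\sqrt{m}$ lies within edit distance $o(m)$ of a complete bipartite $K_{a,b}$ with $ab\le m$ and with both $a,b\ge c\sqrt{m}$. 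Any non-bipartite edge added to such a scaffold sits inside one of the parts and automatically forms a book of size $\min\{a,b\}\ge c\sqrt{m}$ with the vertices of the opposite part, finishing the proof. The counting identity of Lemma \ref{lem21}, applied together with the structural ideas of \cite{LP2022second} and \cite{NZ2021}, should provide the toolkit.

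The principal obstacle is the quantitative stability step. A qualitative stability statement can be extracted from Theorem \ref{thm14}, but it does not pinpoint a book of size $\Omega(\sqrt{m})$: a priori the deviations from complete-bipartite structure could be diffuse, for instance a union of many vertex-disjoint edges scattered inside one part, each producing a book of only constant size. To exclude such configurations one must show that diffuse deviations cost too much in spectral radius, pushing $\lambda$ strictly below $\sqrt{m}$. This seems to require a careful accounting of how each local deviation depresses the Perron eigenvector entries, perhaps coupled with the Cauchy interlacing Lemma \ref{lemCauchy} applied to the principal submatrix indexed by the deviation vertices. If such a quantitative stability theorem can be established, grafting it onto the pigeonhole inequality above should resolve the conjecture.
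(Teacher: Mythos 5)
This statement is posed in the paper as an open conjecture --- the paper itself gives no proof, and the best known result in this direction is Nikiforov's bound $bk(G)>\frac{1}{12}\sqrt[4]{m}$ --- so what you have written should be judged as a proof strategy, and as such it has a genuine gap that you yourself flag but do not close. Your opening computation is fine: expanding $\lambda^2=(A^2\bm{x})_z$ at a maximal-entry vertex $z$ gives $t_z\ge\lambda^2-m$ and hence $bk(G)\ge 2(\lambda^2-m)/d(z)$ with $d(z)\ge\lambda\ge\sqrt{m}$. But this only yields $bk(G)\ge c\sqrt{m}$ when $\lambda^2-m\ge\eta\sqrt{m}\,d(z)$, i.e. when $\lambda^2\ge(1+\eta)m$, so the entire interesting range $\sqrt{m}\le\lambda<\sqrt{(1+\eta)m}$ (in particular the equality case $\lambda=\sqrt{m}$, which is the heart of the conjecture) is deferred to a ``quantitative stability theorem'' that you do not prove. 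That theorem is the whole difficulty: nothing in Theorem \ref{thm14}, Lemma \ref{lem21}, or Lemma \ref{lemCauchy} currently rules out the diffuse configurations you describe, where the deviation from complete-bipartite structure is spread over many edges each lying in only constant-size books, and no argument is given that such deviations force $\lambda<\sqrt{m}$.

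Two further points would need repair even if a stability statement were available. First, as literally stated your intermediate tool is false: $K_n$ has $\lambda\approx\sqrt{2m}\ge\sqrt{m}$, is not complete bipartite, and is at edit distance $\Theta(m)$ from every complete bipartite graph with $ab\le m$, so the tool must carry the near-tight-regime hypothesis explicitly. Second, edit distance $o(m)$ to $K_{a,b}$ does not ``automatically'' produce a book of size $\min\{a,b\}$: the edit operations include deletions, so the two endpoints of an intra-part edge may have lost most of their edges to the opposite part, and their common neighbourhood there can be empty. One would need a stability statement controlling the structure locally around the non-bipartite edges (for instance, that both endpoints of some intra-part edge retain $\Omega(\sqrt{m})$ common neighbours in the other part), which is strictly stronger than closeness in edit distance. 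Until those steps are supplied, the proposal is a reasonable plan of attack but not a proof of the conjecture.
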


\subsection*{Acknowledgements} 
This work was supported by  NSFC (Grant No. 11931002).  
We would like to express sincere thanks to 
Huiqiu Lin, Bo Ning and Mingqing Zhai 
for kind discussions, which considerably improves the presentation of the manuscript.

\frenchspacing


\begin{thebibliography}{99}

\bibitem{AZ2014}
M. Aigner, G. M. Ziegler, 
Proofs from THE BOOK, 5th edition, 
Springer-Verlag, New-York, 2014. See Chapter 40.   

\bibitem{Alon1986}
N. Alon, Eigenvalues and expanders, 
Combinatorica 6 (1986) 83--96. 

\bibitem{BZ2001}
A. Berman, X.-D. Zhang, 
On the spectral radius of graphs with cut vertices, 
J. Combin. Theory Ser. B 83 (2001) 233--240. 

\bibitem{Bil2006}
Y. Bilu, Tales of Hoffman: Three extensions of 
Hoffman's bound on the graph chromatic number, 
J. Combin. Theory Ser. B 96 (2006) 608--613. 

\bibitem{Bollobas78}
B. Bollob\'as, Extremal Graph Theory, Academic Press, New York, 1978.


\bibitem{BT1981}
B. Bollob\'as, A. Thomason, 
Dense neighbourhoods and Tur\'{a}n's theorem, 
J. Combin. Theory Ser. B 31 (1981) 111--114. 

\bibitem{BN2007jctb} 
B. Bollob\'{a}s, V. Nikiforov, 
Cliques and the spectral radius, 
J. Combin. Theory Ser. B 97 (2007) 859--865. 

\bibitem{Bon1983}
J.A. Bondy, 
Large dense neighbourhoods and Tur\'{a}n's theorem, 
J. Combin. Theory Ser. B 34 (1983) 109--111.  

\bibitem{BM2008}
J.A. Bondy,  U.S.R. Murty,   
Graph Theory, Vol. 244 of Graduate Texts in Mathematics, Springer, 2008. 

\bibitem{CZ2018}
M.-Z. Chen, X.-D. Zhang, Some new results and problems in spectral
extremal graph theory, (Chinese), J. Anhui Univ. Nat. Sci. 42 (2018) 12--25.

\bibitem{Chung1989}
F. Chung, Diameters and eigenvalues, 
J. Amer. Math. Soc. 2 (1989) 187--196. 

\bibitem{C2006}
S.M. Cioab\u{a}, On the extreme eigenvalues of regular graphs, 
J. Combin. Theory Ser. B 96 (2006) 367--373.

\bibitem{CGN2007}
S.M. Cioab\u{a},  D.A. Gregory, V. Nikiforov, 
Extreme eigenvalues of nonregular graphs, 
J. Combin. Theory Ser. B 97 (2007)  483--486.

\bibitem{CGH2009}
S.M. Cioab\u{a}, D.A.  Gregory, W.H. Haemers, 
Matchings in regular graphs from eigenvalues, 
J. Combin. Theory Ser. B 99 (2009) 287--297.

\bibitem{CDKL2010}
S.M. Cioab\u{a},  E.R. van Dam,  J.H. Koolen,  J.-H. Lee,  
A lower bound for the spectral radius of graphs with fixed diameter, 
European J. Combin. 31 (2010) 1560--1566.









\bibitem{FS13} 
Z. F\"uredi,  M. Simonovits,
The history of degenerate (bipartite) extremal graph problems,  
in Erd\H{o}s Centennial, 
Bolyai Soc. Math. Stud., 25, 
J\'{a}nos Bolyai Math. Soc., Budapest, 2013, 
pp. 169--264. 


\bibitem{GN2008}
C.D. Godsil, M. W. Newman, Eigenvalue bounds for independent sets, 
J. Combin. Theory Ser. B 98  (2008) 721--734. 



\bibitem{GLLY2016}
X. Gu, H.-J. Lai, P. Li, S. Yao, 
 Edge-disjoint spanning trees, edge connectivity, and eigenvalues in graphs, 
 J. Graph Theory 81 (2016) 16--29.

\bibitem{Gui1996}
B.D. Guiduli, 
 Spectral extrema for graphs, 
 Ph.D. Thesis, 105 pages,  
University of Chicago, December 1996. 
See \url{http://people.cs.uchicago.edu/~laci/students/guiduli-phd.pdf} 


\bibitem{Hong1988}
Y. Hong, A bound on the spectral radius of graphs, 
Linear Algebra Appl. 108 (1988) 135--140.

\bibitem{Hong1998}
Y. Hong, Upper bounds of the spectral radius of graphs 
in terms of genus, 
J. Combin. Theory Ser. B 74 (1998) 153--159. 

\bibitem{HSF2001}
Y. Hong, J.-L. Shu, K. Fang, 
A sharp upper bound of the spectral radius of graphs, 
 J. Combin. Theory Ser. B 81 (2001) 177--183. 

\bibitem{Huang2019}
H. Huang, 
Induced subgraphs of hypercubes and a proof of the Sensitivity Conjecture, Annals of Mathematics, 190 (2019) 949--955.

\bibitem{JTYZZ2021}
Z. Jiang,  J. Tidor, Y. Yao, S. Zhang, Y. Zhao, 
 Equiangular lines with a fixed angle, 
Annals of Mathematics, 194 (2021) 729--743.

\bibitem{KN2014} 
L. Kang, V. Nikiforov, 
Extremal problem for the $p$-spectral radius of graphs, 
Electronic J. Combin. 21 (3) (2014) 87--101.  

\bibitem{LSY2022}
S. Li, W. Sun, Y. Yu, 
Adjacency eigenvalues of graphs without short odd cycles, 
Discrete Math. 345 (2022) 112633.  

\bibitem{LFL2022} 
Y. Li, L.H. Feng, W. Liu, A survey on spectral conditions for some extremal
graph problems, Advances in Math. (China), 51 (2) (2022) 193--258. 
 arXiv:2111.03309. 

\bibitem{LP2022}
Y. Li, Y. Peng, New proofs of stability theorems on spectral graph problems, 
24 pages, (2022), arXiv:2203.03142.
 See \url{https://arxiv.org/abs/2203.03142}. 
 
 \bibitem{LP2022second}
 Y. Li, Y. Peng, 
 Refinement on  spectral Tur\'{a}n's theorem, 31 pages, (2022), arXiv:2204.09194. 
 See \url{https://arxiv.org/abs/2204.09194}.  
 


\bibitem{LN2021}
H. Lin,  B. Ning, 
A complete solution to the Cvetkovi\'{c}--Rowlinson conjecture, 
J. Graph Theory 97 (3) (2021) 441--450. 

\bibitem{LNW2021} 
H. Lin, B. Ning, B. Wu, 
Eigenvalues and triangles in graphs, 
Combin. Probab. Comput. 30 (2) (2021) 258--270.  

\bibitem{LG2021}
H. Lin, H. Guo, 
A spectral condition for odd cycles in non-bipartite graphs, 
Linear Algebra Appl.  631 (2021) 83--93. 

\bibitem{LSW2007}
B. Liu, J. Shen, X. Wang, 
 On the largest eigenvalue of non-regular graphs, 
 J. Combin. Theory Ser. B 97 (2007) 1010--1018.
 
 
 \bibitem{Lu2012}
 H. Lu,  Regular graphs, eigenvalues and regular factors, 
 J. Graph Theory 69 (2012) 349--355.
 
 \bibitem{LLT2007} 
M. Lu, H. Liu, F. Tian, 
Laplacian spectral bounds for clique and independence numbers of graphs, 
J. Combin. Theory Ser. B 97 (5) (2007) 726--732. 



\bibitem{Man1907}
W. Mantel, Problem 28, Solution by H. Gouwentak, W. Mantel, J. Teixeira de Mattes, F. Schuh and W. A. Wythoff. Wiskundige Opgaven, 10 (1907) 60--61. 


\bibitem{Niki2002cpc} 
V. Nikiforov, 
Some inequalities for the largest eigenvalue of a graph, 
Combin. Probab. Comput. 11 (2002) 179--189.  

\bibitem{Niki2007laa2} 
V. Nikiforov, Bounds on graph eigenvalues II, 
Linear Algebra Appl. 427 (2007) 183--189. 

\bibitem{Niki2009laa} 
V. Nikiforov, 
The maximum spectral radius of $C_4$-free graphs of given order and size, 
Linear Algebra Appl. 430 (2009) 2898--2905.  

\bibitem{Niki2009jctb} 
V. Nikiforov, More spectral bounds on the clique and independence numbers, 
J. Combin. Theory Ser. B 99 (2009), no. 6, 819--826. 

\bibitem{NikifSurvey}
V. Nikiforov,  Some new results in extremal graph theory, 
 Surveys in Combinatorics,  London Math. Soc. Lecture Note Ser., 392, Cambridge Univ. Press, Cambridge, 2011, pp. 141--181. 

 
 
 \bibitem{Niki2021}
 V. Nikiforov,  On a theorem of Nosal, 
 12 pages (2021), arXiv:2104.12171, 
 see \url{https://arxiv.org/abs/2104.12171}  
 
 
 \bibitem{NZ2021}
B. Ning, M. Zhai, Counting substructures and eigenvalues I: triangles, 
(2021), arXiv:2112.12937. 
See \url{https://arxiv.org/abs/2112.12937}. 

\bibitem{NZ2021b}
B. Ning, M. Zhai, Counting substructures and eigenvalues II: quadrilaterals, (2021), arXiv:2112.15279. 
See \url{https://arxiv.org/abs/2112.15279}. 

\bibitem{Nosal1970}
E. Nosal, Eigenvalues of graphs, Master's thesis, University of Calgary, 1970.

\bibitem{OC2010}
S. O, S.M. Cioab\u{a}, 
 Edge-connectivity, eigenvalues, and matchings in regular graphs, 
 SIAM J. Discrete Math. 24 (2010) 1470--1481. 
 

\bibitem{Sim13}
M. Simonovits,  
Paul Erd\H{o}s' influence on extremal graph theory, 
in The Mathematics of Paul Erd\H{o}s II, 
R.L. Graham, Springer, New York, 2013, pp. 245--311. 



\bibitem{TT2017} 
M. Tait, J. Tobin, 
Three conjectures in extremal spectral graph theory, 
J. Combin. Theory Ser. B 126 (2017) 137--161.  

\bibitem{Tait2019}
M. Tait, The Colin de Verdi\`{e}re parameter, excluded minors, and 
the spectral radius, J. Combin. Theory Ser. A 166 (2019) 42--58. 

\bibitem{Turan41}
P. Tur\'{a}n, 
On an extremal problem in graph theory, 
Mat. Fiz. Lapok 48 (1941), pp. 436--452. 
(in Hungarian). 

\bibitem{Wil1986} 
H. Wilf, Spectral bounds for the clique and indendence numbers of graphs, 
J. Combin. Theory Ser. B 40 (1986) 113--117.  

\bibitem{WE2013}
P. Wocjan, C. Elphick,  
New spectral bounds on the chromatic number 
encompassing all eigenvalues of the adjacency matrix, 
Electron. J. Combin. 20 (2013), no. 3, P. 39. 


\bibitem{ZLS2021}
M. Zhai,  H. Lin,  J. Shu, 
Spectral extrema of graphs with fixed size: 
Cycles and complete bipartite graphs, 
European J. Combin. 95 (2021), 103322. 

\bibitem{ZS2022dm}
M. Zhai, J. Shu, 
A spectral version of Mantel's theorem, 
Discrete Math. 345 (2022), 112630. 

\bibitem{Zhan13}
X. Zhan, Matrix Theory, Graduate Studies in Mathematics, vol. 147, 
Amer. Math. Soc., Providence, RI, 2013. 

\bibitem{Zhang11}
F. Zhang, Matrix Theory: Basic Results and Techniques, 2nd edition, 
Springer, New York, 2011. 


\end{thebibliography}
\end{document}